\title{The Brauer group of tori}
\author{Julian Demeio}
\begin{document}
	
	\maketitle
	
	\begin{abstract}
		\noindent We show that the map $\Br T \to (\Br  T_{\ok})^{\Gamma_k}$ is surjective for a torus $T$ defined over a field $k$ of characteristic $0$ when $k$ is a local or global field or $T$ is quasi-trivial.
	\end{abstract}
	
	\section{Introduction}\label{Sec1}

        The {\em Brauer group} of a scheme $X$ is the second étale cohomology group $H^2_{\et}(X,\G_m)$. When $X$ is a variety over a perfect field $k$, we have a natural map
        \[
        \Br X \to (\Br X_{\ok})^{\Gamma_k}
        \]
        from Brauer classes of $X$ to Galois-invariant Brauer classes of $X_{\ok}$. Its kernel $\Br_1X$ is the {\em algebraic Brauer group} of $X$, and its image $\Br_{tr}X$ is the {\em transcendental Brauer group} of $X$.
        
        In this note, we are interested in the Brauer group of a torus $T$ over a field $k$ of characteristic $0$. The algebraic Brauer group of $T$ is well understood (see e.g.\ Section 9.1 of \cite{BGbook}), so we focus our attention on the transcendental Brauer group. It is known that the exponent of the quotient $ (\Br X_{\ok})^{\Gamma_k} / \Br_{tr}X$ divides $2$, see e.g.\ the proof of \cite[Proposition 9.1.3]{BGbook} in Colliot-Thélène and Skorobogatov's book.
	Our main theorem is:
	\begin{theorem}\label{Thm1}
            Let $k$ be a field of characteristic $0$, and $T$ be a $k$-torus. The identity $ \Br_{tr}X = (\Br X_{\ok})^{\Gamma_k} $ holds if either:
		\begin{enumerate}[label=(\roman*)]
			\item the torus $T/k$ is quasi-trivial;
			\item $k$ is a local or global field.
		\end{enumerate}
	\end{theorem}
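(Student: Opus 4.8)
The plan is to read off the obstruction to surjectivity from a single differential in the Hochschild–Serre spectral sequence $E_2^{p,q}=H^p(\Gamma_k,H^q_{\et}(T_{\ok},\G_m))\Rightarrow H^{p+q}_{\et}(T,\G_m)$. Since $T_{\ok}\cong\G_m^{\dim T}$ is an open subvariety of affine space, $\mathrm{Pic}\,T_{\ok}=0$, so the entire row $E_2^{\bullet,1}$ vanishes. Hence $d_2$ out of $E_2^{0,2}=(\Br T_{\ok})^{\Gamma_k}$ is zero, $E_3^{0,2}=E_2^{0,2}$, and $\Br_{tr}T=E_\infty^{0,2}$ is exactly the kernel of
\[
d_3\colon (\Br T_{\ok})^{\Gamma_k}\longrightarrow E_3^{3,0}=H^3\bigl(\Gamma_k,\ok[T]^{\times}\bigr)=H^3(\Gamma_k,\ok^{\times})\oplus H^3(\Gamma_k,M),
\]
where $M=\widehat T$ is the character lattice and the splitting comes from $\ok[T]^{\times}\cong\ok^{\times}\oplus M$. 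The theorem is therefore equivalent to the vanishing of $d_3$, and since the cited bound makes its image killed by $2$, it is enough to control $d_3$ on the $2$-primary part of the source.

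Next I would use the structure $\Br T_{\ok}\cong(\bigwedge^2 M)\otimes_{\mathbb{Z}}(\mathbb{Q}/\mathbb{Z})(-1)$ as $\Gamma_k$-modules, coming from the exterior-algebra structure of $H^*_{\et}(\G_{m,\ok})$ and the Künneth formula. On $2$-torsion the Tate twist is trivial, because $\mu_2$ is rational over every field, so the relevant source is $(\bigwedge^2(M/2))^{\Gamma_k}$. To get a handle on $d_3$ I would exploit the multiplicative structure of the spectral sequence: every class in $H^2(T_{\ok},\mu_n)=\Br T_{\ok}[n]$ is a sum of cup products of degree-one classes attached to characters in $M/n$, and although the factors are not individually $\Gamma_k$-invariant, the Leibniz rule turns $d_3$ of an invariant combination into a secondary (Massey-type) operation with values in $H^3(\Gamma_k,\ok[T]^{\times})$. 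The goal of this step is an explicit formula for $d_3$ on $2$-torsion in terms of cup products in Galois cohomology involving the class of $-1$.

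For a quasi-trivial torus (case (i)) $M$ is a permutation module, so $T\cong\prod_i R_{k_i/k}\G_m$ and I would reduce, via Shapiro's lemma $H^3(\Gamma_k,M)=\bigoplus_i H^3(\Gamma_{k_i},\mathbb{Z})$ and the factorisation of $T$, to a single Weil restriction $R_{K/k}\G_m$. Realising it as the complement of the norm hypersurface in $\mathbb{A}^{[K:k]}_k$, where $\Br$ is merely $\Br k$ and carries no transcendental classes, the Gysin/residue sequence detects invariant transcendental classes by their $\Gamma_k$-invariant residue data along the boundary components; the permutation structure makes this data defined over $k$ and produces actual lifts, forcing $d_3=0$. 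This argument must be purely structural, since $k$ is arbitrary here.

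For a local or global field (case (ii)) I would instead kill the target arithmetically and reduce everything to the real place. At every finite place the strict cohomological dimension of $\Gamma_{k_v}$ is $2$, so $H^3(\Gamma_{k_v},\ok_v^{\times})=H^3(\Gamma_{k_v},M)=0$, and at complex places the group is trivial; over a number field one moreover has $H^3(\Gamma_k,\ok^{\times})=0$, so the obstruction lies in $H^3(\Gamma_k,M)$, which by Poitou–Tate theory is detected by its localisations at the real places. By functoriality of the spectral sequence the global $d_3$ localises to the $d_3$ of each $T_{k_v}$, so the whole case reduces to showing $d_3=0$ over $k=\mathbb{R}$. This archimedean computation is where I expect the real difficulty: here $\Gamma=\mathbb{Z}/2$ has periodic cohomology and $H^3(\mathbb{Z}/2,M)=\widehat H^1(\mathbb{Z}/2,M)$ need not vanish, so I cannot argue by the target being zero and must instead use the explicit cup-product formula from the second step to show directly that $d_3$ annihilates every invariant $2$-torsion class. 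Concretely this should come down to the vanishing of a specific symbol built from $-1\in\mathbb{R}^{\times}$, with complex conjugation acting by inversion on the twist, which I expect to follow from the standard relations in $H^*(\mathbb{Z}/2,\cdot)$. Establishing this formula and its vanishing is the technical heart of the argument; granting it, both cases of the theorem follow.
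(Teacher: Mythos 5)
Your skeleton for case (ii) --- reading the obstruction off the differential $d_3^{0,2}\colon (\Br T_{\ok})^{\Gamma_k}\to H^3(\Gamma_k,\ok[T]^{\times})$, killing it at non-archimedean places by strict cohomological dimension, and reducing number fields to their real places by Poitou--Tate --- is exactly the paper's route, and the reduction to $2$-primary torsion via the exponent-$2$ bound is sound. But both of the steps that carry the actual content have genuine gaps. For case (i), the proposed reduction to a single Weil restriction is not available: Shapiro applies to the target $H^3(\Gamma_k,M)$, but the source is $(\Lambda^2 M\otimes \Q/\Z(-1))^{\Gamma_k}$, and $\Lambda^2$ of a direct sum carries cross terms $M_i\otimes M_j$ that do not live on any single factor. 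More seriously, the assertion that Galois-invariant residue data along the boundary ``produces actual lifts'' is precisely what must be proved; invariance of residues does not by itself yield an unramified class over $k$. The paper's proof writes down explicit candidates $\cores_{E_{ij}/k}(y_i,y_j)_{n_{ij}}$ and, crucially, in the case where some Galois element swaps the two characters $y_i,y_j$, the non-obvious element $\cores_{E_{ij}/k}(y_i,y_j-y_i)_{n'_{ij}}$ with $n'_{ij}=\gcd(n_{ij},1+\chi(\sigma_{ij}))$, whose unramifiedness requires a residue computation along the divisor $y_i=y_j$ (Lemma \ref{Lem1}) using the Verlagerung. This ``swapped pair'' phenomenon, which is where the whole difficulty of the quasi-trivial case sits, does not appear in your sketch.

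For the real case, your proposal defers the key computation: you acknowledge that an explicit formula for $d_3$ on $2$-torsion must be ``established,'' and expect it to reduce to a symbol in $-1$ vanishing by standard relations. The differential $d_3^{0,2}$ of the $\G_m$-coefficient spectral sequence is a secondary operation for which no such cup-product formula is available off the shelf, and the paper explicitly remarks that there is no transparent reason for it to vanish for a general field. The paper's actual argument is of a different nature: it passes to $\mu_{\infty}$ coefficients (where, since $H^1(\bar T,\mu_\infty)\neq 0$, the obstruction becomes a second-page differential $d_2^{0,2}$), transports the problem to the group cohomology of the extension $X_*(T)(1)\rtimes\Gal(\C/\R)$ by comparing the \'etale and profinite Hochschild--Serre spectral sequences (this comparison is the point of the appendix), and then kills $d_2^{0,2}$ via the Charlap--Vasquez characteristic class $v_2$, whose vanishing for $C_2$-lattices follows from the classification of indecomposable $C_2$-modules and Sah's theorem in rank $2$. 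As it stands, the technical heart of both cases is missing from the proposal.
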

	
	In case {(i)}, we prove in fact a more precise result (see Theorem \ref{Thm2}), that provides an explicit basis of the finite abelian group $\Br T/\Br_1T$. The exact expression for the elements of this basis was inspired by some Hilbert symbols appearing in the work  \cite{Shafarevich} (see also \cite[Theorem IX.9.3.2]{GermanBook}) of Shafarevich.
	Case {(ii)} is proven by analyzing the exact sequence
	\begin{equation}\label{Eq15}
		0 \to \Br T/\Br_1T \to (\Br T_{\ok})^{\Gamma_k} \to[d^{0,2}_3] H^3(k,\ok[T]^*),
	\end{equation}
	where $d^{0,2}_3$ is the third-page Hochschild-Serre spectral sequence differential. Since the cohomological dimension of non-archimedean local fields is $2$, and  when $k$ is a number field  $H^3(k,\ok[T]^*)= \bigoplus_{\substack{v \in M_K\\v \text{ real}}}H^3(k_v,\ok_v[T]^*)$ by the Poitou-Tate theorems, the only interesting case is that of $k=\R$. % , since
	% \begin{itemize}
	% \item when $k$ is local non-archimedean,
	% one has $H^3(k,\ok[T]^*)=0$ because the strict cohomological dimension of $k$ is $2$; and
	% \item when $k$ is a number field,
	% one has $H^3(k,\ok[T]^*)=\bigoplus_{\substack{v \in M_K\\v \text{ real}}}H^3(k_v,\ok_v[T]^*)$ by the Poitou-Tate exact sequence.
	% \end{itemize}
	In this case, we use the theory of characteristic classes for the 
	Hochschild-Serre spectral sequence of group extensions developed by Charlap and Vasquez \cite{CV1}\cite{CV2} (and a subsequent result by Sah \cite{Sah1}) to show that the differential $d^{0,2}_3$ is trivial.
	
	\begin{remark*}
		We ignore whether the injection $\Br T/\Br_1T \to (\Br T_{\ok})^{\Gamma_k}$ is an isomorphism for a general torus $T$ defined over an arbitrary field $k$ of characteristic $0$, with a slight suspicion that this should not be the case as there seems to be no transparent reason for the differential $d^{2,0}_3$ to be trivial (our proof of its triviality in the real case heavily relies on the cyclicity of the absolute Galois group of $\R$). 
	\end{remark*}

        \vskip3mm

        {\bf Structure of the paper.} In Section \ref{Sec2} we fix some notation. In Section \ref{Sec3} we state and prove Theorem \ref{Thm2}. In Section \ref{Sec4} we state and prove Theorem \ref{Thm3} (i.e.\ Theorem \ref{Thm1} for real tori), and prove Theorem \ref{Thm1}. In the appendix, we show a compatibility result between different Hochschild-Serre spectral sequences (this is used in the proof of Theorem \ref{Thm3}).
	
	\section{Notation}\label{Sec2}
	
	Let $k$ be a field of characteristic $0$, $\bar k$ be its algebraic closure and $\Gamma_k$ be the absolute Galois group $\Gal(\bar k/k)$. 
		
	Let $n$ be a natural number such that $\mu_n \subset k^*$, and $X$ be a $k$-variety. We denote by
	\begin{equation}\label{Eq:Hilbert}
	(-,-)_n : (k(X)^*/n)^{\otimes 2} \to[\cup] H^2(k(X), \mu_n^{\otimes 2})=H^2(k(X), \mu_n)(1) \to (\Br k(X))_n(1)
	\end{equation}
	the {\em Hilbert symbol} defined by composing the Hilbert 90 isomorphism $k(X)^*/n \to[\sim] \linebreak H^1(k(X), \mu_n)$ with the cup-product. Choosing a primitive $n$-th root of unity $\zeta$ gives an identification $(\Br k(X))_n(1)=(\Br k(X))_n$. For $f,g \in k(X)^*$, we let $(f,g)_{\zeta,n}$ be the image of $(f,g)_n$ under this identification. When the choice of $\zeta$ is clear from context, we drop it from notation.
	
	\section{Transcendental Brauer groups of quasi-trivial tori}\label{Sec3}
	Let $Q$ be a quasi-trivial torus over $k$ of rank $r$, and 
	\[
	y_1,\ldots,y_r:Q_{\ok} \to \G_{m,\bar k}
	\]
	be a $\Gamma_k$-equivariant basis of characters.
	For $1 \leq i < j \leq r$, we let:
	\begin{enumerate}
		\item $E_{ij}$ be the subfield of $\bar k$ such that  $\Gamma_{E_{ij}} < \Gamma_k$ is the stabilizer of the ordered pair $(y_i,y_j)$ under the $\Gamma_k$-action;
		\item $E'_{ij}$ be the subfield of $E_{ij}$ such that  $\Gamma_{E'_{ij}} < \Gamma_k$ is the stabilizer of the unordered pair $\{y_i,y_j\}$ under the $\Gamma_k$-action.
	\end{enumerate}
	Note that $E'_{ij} \subset E_{ij}$ and the extension $E_{ij}/E'_{ij}$ is either trivial or quadratic for every pair $i<j$. We also let:
	\begin{enumerate}[resume]
		\item $n_{ij}$ be the largest order of a root of unity contained in $E_{ij}$;
		\item $\sigma_{ij}$ be the generator of the group $\Gal(E_{ij}/E'_{ij})$ (this is non-trivial if and only if $E'_{ij}\neq E_{ij}$);
		\item $n'_{ij}\coloneqq \gcd(n_{ij},1+\chi(\sigma_{ij}))$, where $\chi$ denotes the cyclotomic character.
	\end{enumerate}
	
	We let $O$ be the set of orbits for the $\Gamma_k$-action on the unordered pairs $\{y_i,y_j\}, 1 \leq i < j \leq r$. For each orbit $o \in O$, let $i(o)< j(o)$ be a pair of indices such that $\{y_{i(o)},y_{j(o)}\} \in o$. Let $R\subset \binom{\{1,\ldots,r\}}{2}$ be the collection of pairs $\{i(o),j(o)\}_{o\in O}$.
	
	We fix an inverse system of roots of unity $(\zeta_n)_{n \in \N} \subset \bar k^*, \zeta_m^{m/n}=\zeta_n$ whenever $n \mid m$.
	\begin{theorem}\label{Thm2}        
		The following elements of $\Br k(Q)$ lie in $\Br Q \subset \Br k(Q)$:
		\begin{enumerate}[label={\em(\Roman*)}]
			\item $\cores_{E_{ij}/k}(y_i,y_j)_{n_{ij}}$ for every pair $i<j$,
			\item $\cores_{E_{ij}/k}(y_i,y_j-y_i)_{n'_{ij}}$ for every pair $i<j$ such that $E_{ij}/E'_{ij}$ is quadratic.
		\end{enumerate}
            Moreover, the images of (I) and (II) under the natural map
		\[
		\Br Q/\Br_1 Q \to (\Br Q_{\ok})^{\Gamma_k}
		\]
		generate $(\Br Q_{\ok})^{\Gamma_k}$ and they form a basis of this abelian group when we restrict only to pairs $(i,j)$ with $\{i,j\} \in R$.
	\end{theorem}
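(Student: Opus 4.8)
The plan is to pin down $\Br Q_{\ok}$ as a $\Gamma_k$-module, read off its invariants orbit by orbit, and then verify that the classes (I) and (II) are unramified and hit the correct generators.

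First I would determine the geometric Brauer group. Writing $M=\bigoplus_i\mathbb{Z}y_i$ for the character lattice (a permutation $\Gamma_k$-module, since $Q$ is quasi-trivial), the cohomology of $\G_m^r$ together with $H^1(\G_m,\mu_n)\cong\mathbb{Z}/n$ and the vanishing of higher cohomology of $\G_m$ over $\ok$ yields, by Künneth, $\Br Q_{\ok}[n]\cong\bigl(\bigwedge^2 M/n\bigr)(-1)$ and hence $\Br Q_{\ok}\cong\bigwedge^2 M\otimes(\mathbb{Q}/\mathbb{Z})(-1)$. Under this identification the Hilbert symbol $(y_i,y_j)_n$ is the order-$n$ generator of the $\{i,j\}$-summand, and the Galois action obeys $\sigma\cdot(y_i,y_j)_n=\chi(\sigma)^{-1}(y_{\sigma i},y_{\sigma j})_n$, i.e.\ the permutation action on $\bigwedge^2 M$ twisted by the inverse cyclotomic character.

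Next I would compute the invariants. Grouping summands according to the orbits $o\in O$ gives $\Br Q_{\ok}=\bigoplus_o B_o$ with $B_o=\mathrm{Ind}_{\Gamma_{E'_{ij}}}^{\Gamma_k}\bigl(\varepsilon_o\otimes(\mathbb{Q}/\mathbb{Z})(-1)\bigr)$, where $\varepsilon_o$ is the sign character of the unordered-pair stabiliser $\Gamma_{E'_{ij}}$ (trivial exactly when $E_{ij}=E'_{ij}$). By Shapiro's lemma $B_o^{\Gamma_k}=\bigl(\varepsilon_o\otimes(\mathbb{Q}/\mathbb{Z})(-1)\bigr)^{\Gamma_{E'_{ij}}}$, a cyclic group: the $\Gamma_{E_{ij}}$-condition restricts to the $\mu_{n_{ij}}$-level, and in the quadratic case the further relation coming from $\sigma_{ij}$ (forcing $\chi(\sigma_{ij})\equiv-1$ on the surviving part) cuts this down to the level $n'_{ij}=\gcd(n_{ij},1+\chi(\sigma_{ij}))$. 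This exhibits $(\Br Q_{\ok})^{\Gamma_k}$ as a direct sum of cyclic groups indexed by $O$ and tells me precisely which element each generator should be.

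That the classes lie in $\Br Q$ is the easy part: $y_i$, $y_j$ and $y_j-y_i$ are characters, hence units on $Q_{E_{ij}}$, so the symbols in (I) and (II) are cup products of Kummer classes of units and thus unramified along every divisor, i.e.\ lie in $\Br Q_{E_{ij}}\subset\Br E_{ij}(Q)$; corestriction along the finite étale cover $Q_{E_{ij}}\to Q$ is compatible with the inclusions into function-field Brauer groups and so lands in $\Br Q$. The heart of the argument, and the step I expect to be the main obstacle, is the image computation: applying $\mathrm{res}_{\ok/k}\circ\cores_{E_{ij}/k}=\sum_{\tau\in\Gamma_k/\Gamma_{E_{ij}}}\tau$, the images of (I) and (II) become twisted orbit-sums of the symbols, whose $\{i,j\}$-components I would evaluate using the identities $(a,a)=(a,-1)$ and $(a,-1)=0$ over $\ok$ together with the value $\chi(\sigma_{ij})$. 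The delicate point is exactly the quadratic case: when $\sigma_{ij}$ simultaneously swaps $y_i,y_j$ and moves roots of unity, the swap sign competes with the cyclotomic twist, the plain symbol $(y_i,y_j)$ reaches only part of the cyclic group, and one expects to need the second entry $y_j-y_i$ together with the modulus $n'_{ij}$ to reach a generator — this is where the Hilbert-symbol shape borrowed from Shafarevich is essential, and verifying that (I) and (II) indeed exhaust $B_o^{\Gamma_k}$ here is the crux. Once each $R$-indexed class is shown to generate its summand $B_o^{\Gamma_k}$, the basis statement is immediate from the orbit decomposition (distinct orbits give independent summands), while allowing all pairs merely adjoins Galois conjugates and hence a generating set.
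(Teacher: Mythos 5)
There is a genuine gap, and it sits exactly at the step you dismiss as ``the easy part.'' You claim that $y_i$, $y_j$ and $y_j-y_i$ are all units on $Q_{E_{ij}}$, so that the symbols in (I) \emph{and} (II) are cup products of Kummer classes of units and hence everywhere unramified. This is false for $y_j-y_i$: it is a difference of two characters, not a character, and it vanishes on the nonempty divisor $D=\{y_i=y_j\}\subset Q_{E_{ij}}$. Consequently the symbol $(y_i,y_j-y_i)_{n'_{ij}}$ is genuinely ramified along $D$ (its residue there is the Kummer character of $y_i$, since $y_j-y_i$ is a uniformizer), and it does \emph{not} lie in $\Br Q_{E_{ij}}$. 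What is true — and what the paper proves in its Lemma \ref{Lem1}, which Remark \ref{Rmk2} explicitly identifies as the only non-trivial point of the first statement — is that the \emph{corestriction} down to $E'_{ij}$ becomes unramified: one computes the residue of $\cores_{E_{ij}/E'_{ij}}(y_i,y_j-y_i)_{n'_{ij}}$ along $D$ via the transfer (Verlagerung) map, uses that $\sigma_{ij}$ swaps $y_i$ and $y_j$ (so lifts of $\sigma_{ij}$ act as reflections in a dihedral quotient and square into the kernel of the residue character), and finds that the residue is $(1+\chi(\sigma_{ij})^{-1})\,r_b$, which vanishes precisely because $n'_{ij}=\gcd(n_{ij},1+\chi(\sigma_{ij}))$. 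Your argument never touches this cancellation, and no purely ``units are unramified'' reasoning can, since the class over $E_{ij}$ is honestly ramified.

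The rest of your outline is sound and essentially matches the paper: the description of $\Br Q_{\ok}$ as $\bigwedge^2 M\otimes(\mathbb{Q}/\mathbb{Z})(-1)$, the orbitwise Shapiro computation of the invariants (yielding cyclic summands of order $n_{ij}$, resp.\ $n'_{ij}$), and the evaluation of $\mathrm{res}\circ\cores$ as a twisted orbit sum reduced by Steinberg relations are all correct and are what the paper does (it carries out the Steinberg manipulation $\sum_{\sigma}\sigma\bigl((y_i,y_j-y_i)_{n'_{ij}}\bigr)=(y_i,y_j)_{n'_{ij}}$ explicitly, which you only sketch). But you have mislocated the crux: the image computation is routine bookkeeping once Lemma \ref{Lem1} is in place, whereas the integrality of the classes (II) is the real content and is missing from your proposal.
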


	Here the symbols $(-,-)_{m} \coloneqq (-,-)_{m,\zeta_m}$ denote the Hilbert symbols
	\[
	(E_{ij}(Q)^*/m)^{\otimes 2} \to (\Br E_{ij}(Q))_m(1)=(\Br E_{ij}(Q))_m
	\]
	where $m=n_{ij}$ or $n'_{ij}$, and the symbols $\cores_{E_{ij}/k}$ denote the corestriction maps \linebreak $\Br E_{ij}(Q) \to \Br k(Q)$ along the finite field extensions $E_{ij}(Q)/k(Q)$.
	
	\begin{remark}\label{Rmk2}
		Since $y_i$ and $y_j$ define invertible functions on the scheme $Q \times_k E_{ij}$, that $\cores_{E_{ij}/k}(y_i,y_j)_{n_{ij}}$ lies in $\Br Q$  follows immediately from general corestriction formalism for schemes (see \cite[Sec.\ 3.8]{BGbook}), so the only non-trivial part of the first statement of Theorem \ref{Thm2} is for elements (II).
	\end{remark}
	
	\subsection{Proof of Theorem \ref{Thm2}}\label{SecRemove}
	\begin{lemma}\label{Lem1}
		Let $i<j$ be such that $E_{ij}/E'_{ij}$ is quadratic, then the element  $\cores_{E_{ij}/k}(y_i,y_j-y_i)_{n'_{ij}} \in \Br k(Q)$ lies in the subgroup $\Br Q \subset \Br k(Q)$.
	\end{lemma}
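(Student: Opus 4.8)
The plan is to prove that $\alpha \coloneqq \cores_{E_{ij}/k}(y_i,y_j-y_i)_{n'_{ij}}$ is unramified at every codimension-one point of $Q$, which suffices: since $Q$ is smooth over a field of characteristic $0$, the subgroup $\Br Q \subset \Br k(Q)$ is the intersection, over all prime divisors $D$ of $Q$, of the kernels of the residue maps $\partial_D\colon \Br k(Q)\to H^1(\kappa(D),\Q/\Z)$. Note that, in contrast with element (I), here $y_j-y_i$ is a genuine difference of functions and is \emph{not} invertible, so $\beta\coloneqq (y_i,y_j-y_i)_{n'_{ij}}$ is not visibly unramified already over $E_{ij}$; this is the source of the difficulty.

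First I would locate the ramification of $\beta$ on $Q_{E_{ij}}$. The character $y_i$ is an invertible function, while $y_j-y_i=y_i\cdot(y_j/y_i-1)$ is regular and vanishes to order one exactly along the subtorus $D_0\coloneqq\{y_i=y_j\}$ (which is irreducible, being the kernel of the primitive character $y_j/y_i$). Hence the tame symbol shows that $D_0$ is the only prime of $Q_{E_{ij}}$ at which $\beta$ ramifies, with residue $\partial_{D_0}\beta=\{y_i|_{D_0}\}\in H^1(\kappa(D_0),\Z/n'_{ij})$, the $\zeta_{n'_{ij}}$-Kummer class of $y_i|_{D_0}$ (the identification being available since $\mu_{n'_{ij}}\subset\mu_{n_{ij}}\subset E_{ij}\subset\kappa(D_0)$). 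By compatibility of residues with corestriction, the residue of $\alpha$ at a prime $D$ of $Q$ is $\cores$ of $\partial_{D_0}\beta$ along the residue-field extension when $D$ is the image $\bar D_0$ of $D_0$, and is $0$ otherwise; so everything reduces to $\bar D_0$.

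The geometric input is that $\sigma_{ij}$ swaps $y_i$ and $y_j$, so it stabilises $D_0$; consequently $D_0$ descends to a prime $D_0'$ of $Q_{E'_{ij}}$, the extension $\kappa(D_0)/\kappa(D_0')$ is quadratic with Galois group $\langle\sigma_{ij}\rangle$, and I would factor the corestriction through $\kappa(D_0')$. Two facts then drive the computation: the element $y_i|_{D_0}=\tfrac12(y_i+y_j)|_{D_0}$ is fixed by $\sigma_{ij}$, hence already defined over $\kappa(D_0')$; and the Brauer residue takes values in $\Z/n'_{ij}$ with \emph{trivial} Galois action, whose $\zeta$-Kummer comparison with $\mu_{n'_{ij}}$ is twisted by $\chi(\sigma_{ij})$ over $E'_{ij}$. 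Tracking this twist through the corestriction—essentially a twisted-norm computation—gives $\cores_{\kappa(D_0)/\kappa(D_0')}(\partial_{D_0}\beta)=(1+\chi(\sigma_{ij}))\cdot\{y_i|_{D_0}\}$. Since $n'_{ij}=\gcd(n_{ij},1+\chi(\sigma_{ij}))$ divides $1+\chi(\sigma_{ij})$, this vanishes modulo $n'_{ij}$; corestricting the rest of the way to $k$ then yields $\partial_{\bar D_0}\alpha=0$, so $\alpha\in\Br Q$.

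The hard part is exactly this twist bookkeeping. The naive computation, using $\mu_{n'_{ij}}$-coefficients throughout, produces the field norm and hence the factor $2=1+1$ rather than $1+\chi(\sigma_{ij})$; only by using the correct (trivial) coefficients for the Brauer residue—where $\sigma_{ij}$ acts on the $\zeta$-normalisation through the cyclotomic character—does the factor $1+\chi(\sigma_{ij})$, and with it the precise role of the modulus $n'_{ij}$, emerge. This is also what makes element (II) genuinely more delicate than element (I): there the residue is already zero, whereas here a nonzero residue along $D_0$ must be annihilated by the reduction of the modulus from $n_{ij}$ to $n'_{ij}$. One subsidiary point to settle is a potential residual $2$-torsion ambiguity in the corestricted residue when $n'_{ij}$ is even; I expect the projection-formula form of the twisted norm to exhibit $\cores_{\kappa(D_0)/\kappa(D_0')}(\partial_{D_0}\beta)$ as $(1+\chi(\sigma_{ij}))$ times an honest class over $\kappa(D_0')$, leaving no such ambiguity.
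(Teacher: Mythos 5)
Your proposal is correct and follows essentially the same route as the paper: purity reduces the claim to the vanishing of the residue along the divisor $\{y_i=y_j\}$, the residue of the corestriction is computed as the transfer (twisted norm) of the Kummer character of $y_i$, and the resulting factor $1+\chi(\sigma_{ij})$ dies modulo $n'_{ij}$ by the definition of $n'_{ij}$. The paper organizes the transfer step via the explicit Verlagerung formulas together with a dihedral-group observation that disposes of exactly the $2$-torsion ambiguity you flag at the end, but the substance of the two arguments is identical.
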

	
	\begin{proof}
		To lighten the notation, let $E \coloneqq E_{ij}, E' \coloneqq E'_{ij}, n' \coloneqq n'_{ij}, \sigma \coloneqq \sigma_{ij}, y\coloneqq y_i, y'\coloneqq y_j$. We have $\cores_{E/k}(y,y'-y)_{n'}  = \cores_{E'/k}\cores_{E/E'}(y,y'-y)_{n'}$. By general corestriction formalism for schemes (similarly as in Remark \ref{Rmk2}) it suffices to prove that $\cores_{E/E'}(y,y'-y)_{n'} \in \Br (Q \times_k E')$.
		
		Let $D \subset Q' \coloneqq Q \times_k E'$ be the (geometrically) irreducible divisor defined by the $\Gal(E/E')$-invariant equation $y=y'$, and let $U \coloneqq Q \s D$. Again by corestriction formalism, it is clear that $\cores_{E/E'}(y,y'-y)_{n'}\in \Br U$. To prove that $b' \coloneqq \cores_{E/E'}(y,y'-y)_{n'}$ lies in $\Br Q,$ it suffices to show that the residue at $D$ vanishes (see purity of the Brauer group \cite[Theorem 3.7.2]{BGbook}). Combining \cite[Proposition 1.4.7]{BGbook} and \cite[Proposition 1.5.9]{GermanBook}, we get a commutative diagram:
		\begin{equation}\label{Eq14}
		% https://tikzcd.yichuanshen.de/#N4Igdg9gJgpgziAXAbVABwnAlgFyxMJZABgBoBGAXVJADcBDAGwFcYkQAdDgIQCcACAKogAvqXSZc+QijLFqdJq3Zc+QgPoBRUeJAZseAkXIUFDFm0QgAEgD1yACk0ByBwBEAlKS4BHAF4eOhIG0sak8jTmylZ2jpruXr4BogowUADm8ESgAGa8EAC2SCYgOBBIAMyRSpYgvEEgeYWVNGVIAEzVFuwAarYAVA1NRYhkpeWInYrdVvViufkjJW2jXdGcHADGELzw6sCaAPQuIikiQA
		\begin{tikzcd}
		\Br U_E \arrow[r, "r"] \arrow[d, "\cores_{E/E'}"] & {H^1(E(D),\qz)=\Hom(\Gamma_{E(D)}^{ab},\qz)} \arrow[d, "V^*"] \\
		\Br U \arrow[r, "r"]                              & {H^1(E'(D),\qz)=\Hom(\Gamma_{E'(D)}^{ab},\qz)},       
		\end{tikzcd}
		\end{equation}
		where the horizontal maps are residue maps, and the second vertical map is pullback along the Verlagerung (transfer) map  $V: \Gamma_{E'(D)}^{ab} \to \Gamma_{E(D)}^{ab}$.    	
		Since $y'-y$ is a uniformizer for $D_E \coloneqq D \times_{E'} E$, the residue $r_b\coloneqq r(b)$ of $b\coloneqq \left(y, y'-y\right)_{n'}$ at $D_E$ is 
		the Kummer character
		\begin{gather*}
		r_b:\Gamma_{E(D)} \to \Z/n'\Z, \  \text{defined by} \ \gamma(\sqrt[n']y)=\zeta_{n'}^{r_b(\gamma)}\sqrt[n']y.
		\end{gather*}
		Using the defining formulas for the Verlagerung map (see e.g.\ \cite[p.52]{GermanBook}), we deduce from the commutativity \eqref{Eq14} that the residue $r_{b'}\coloneqq r(b')$ of $b'$ at $D$ is 
		\begin{equation}\label{Eqpsi1}
		r_{b'}:\Gamma_{E'(D)} \to \Z/n'\Z,\ \  \begin{cases}
		r_{b'}(\gamma) = r_{b}(\gamma) + r_{b}(\bar\sigma^{-1} \gamma \bar\sigma), \\
		r_{b'}(\bar\sigma \cdot \gamma ) = r_{b}(\gamma) + r_{b}(\bar\sigma \gamma \bar\sigma) = r_{b}(\gamma) + r_{b}(\bar \sigma^2)+ r_{b}(\bar\sigma^{-1} \gamma \bar\sigma) ,
		\end{cases}
		\end{equation}
		where $\bar\sigma \in \Gamma_{E'(D)}$ is any lift of $\sigma \in \Gal(E/E')$ to $\Gamma_{E'(D)}$ and $\gamma$ lies in the subgroup $\Gamma_{E(D)} < \Gamma_{E'(D)}$.
		Note that $\bar \sigma^2 \in \Ker r_{b}$. In fact, $\bar \sigma$ projects to a reflection in the dihedral Galois group 
		\begin{multline*}
		\Gamma_{E'(D)}/\Ker r_b=\Gamma_{E'(D)}/\Gamma_{E(D)(\sqrt[n']y)}=\Gal\left(E(D)(\sqrt[n']y)/E'(D) \right)=\\ \Gal\left(E(D)(\sqrt[n']y)/E(D) \right) \rtimes \Gal (E/E') \cong\Z/n'\Z(1) \rtimes \Gal (E/E') \cong \Z/n'\Z \rtimes \Z/2\Z,
		\end{multline*}
            and all reflections of dihedral groups have order $2$.
		Hence the expression \eqref{Eqpsi1} reduces (in both cases) to $r_b(\gamma) + r_b(\bar\sigma^{-1} \gamma \bar\sigma)=r_b(\gamma) + \chi(\sigma)^{- 1}r_b(\gamma)$. By definition of $n'$ we have
		$
		\chi(\sigma)^{-1}r_b(\gamma)=-r_b(\gamma),
		$
		and we obtain the sought vanishing.
	\end{proof}

	For $i \in \{1,\ldots,r\}$ and $\gamma \in \Gamma_k$, we define $\gamma(i) \in \{1,\ldots,r\}$ through the identity $y_{\gamma(i)} \coloneqq \gamma(y_i)$.
	The following lemma is technically not necessary to prove Theorem \ref{Thm2}, but it shows that the second statement of this theorem is independent of the choice of $R$.
	
	\begin{lemma}
			For every $\gamma \in \Gamma_k$:
			\begin{enumerate}[label={\em(\alph*)}]
				\item  $\cores_{E_{\gamma(i)\gamma(j)}/k}(y_{\gamma(i)},y_{\gamma(j)})_{n_{\gamma(i)\gamma(j)}}=\chi(\gamma)^{-1}\cores_{E_{ij}/k}(y_i,y_j)_{n_{ij}}$ for every pair $i<j$,
				\item $\cores_{E_{\gamma(i)\gamma(j)}/k}(y_{\gamma(i)},y_{\gamma(j)}-y_{\gamma(i)})_{n_{\gamma(i)\gamma(j)}}=\chi(\gamma)^{-1}\cores_{E_{ij}/k}(y_i,y_j-y_i)_{n'_{ij}}$ for every pair $i<j$ such that $E_{ij}/E'_{ij}$ is quadratic.
			\end{enumerate}
	\end{lemma}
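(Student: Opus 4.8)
The plan is to exploit the fact that $\gamma$ transports the entire configuration attached to the pair $(i,j)$ onto the one attached to $(\gamma(i),\gamma(j))$, so that the two sides of each identity can differ only through the way $\gamma$ acts on the chosen roots of unity. The first step is the elementary Galois-theoretic bookkeeping: since $\Gamma_{E_{\gamma(i)\gamma(j)}}=\mathrm{Stab}(y_{\gamma(i)},y_{\gamma(j)})=\gamma\,\Gamma_{E_{ij}}\,\gamma^{-1}$, we get $E_{\gamma(i)\gamma(j)}=\gamma(E_{ij})$, and likewise $E'_{\gamma(i)\gamma(j)}=\gamma(E'_{ij})$ and $\sigma_{\gamma(i)\gamma(j)}=\gamma\sigma_{ij}\gamma^{-1}$. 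As $\gamma$ restricts to a field isomorphism $E_{ij}\to E_{\gamma(i)\gamma(j)}$ over $k$, the invariants are preserved: $n_{\gamma(i)\gamma(j)}=n_{ij}$ (largest order of a root of unity is isomorphism-invariant), and $n'_{\gamma(i)\gamma(j)}=n'_{ij}$ (using $\chi(\gamma\sigma_{ij}\gamma^{-1})=\chi(\sigma_{ij})$); in particular the hypothesis that $E_{ij}/E'_{ij}$ be quadratic in part (b) is preserved. Tensoring with $k(Q)$, the automorphism $\gamma$ induces a $k(Q)$-algebra isomorphism $\phi_\gamma\colon E_{ij}(Q)\to E_{\gamma(i)\gamma(j)}(Q)$ sending $y_i\mapsto y_{\gamma(i)}$, $y_j\mapsto y_{\gamma(j)}$ (hence $y_j-y_i\mapsto y_{\gamma(j)}-y_{\gamma(i)}$), and acting on roots of unity by $\zeta_m\mapsto \zeta_m^{\chi(\gamma)}$.

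Next I would compare the Hilbert symbols across $\phi_\gamma$. By naturality of Kummer theory and of the cup product, the \emph{untwisted} symbol is transported with no scalar: $\phi_{\gamma,*}(y_i,y_j)_m=(y_{\gamma(i)},y_{\gamma(j)})_m$ as classes in $(\Br E_{\gamma(i)\gamma(j)}(Q))_m(1)$. The only scalar enters when one trivialises the Tate twist against the \emph{fixed} system $(\zeta_n)$: since $\phi_\gamma(\zeta_m)=\zeta_m^{\chi(\gamma)}$, trivialising $\phi_{\gamma,*}(y_i,y_j)_m$ with respect to $\zeta_m$ rather than $\phi_\gamma(\zeta_m)$ multiplies it by a power of $\chi(\gamma)$. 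Unwinding the identification $(\Br\,)_m(1)=(\Br\,)_m$ against $(\zeta_n)$ yields, in $\Br E_{\gamma(i)\gamma(j)}(Q)$, the relation $(y_{\gamma(i)},y_{\gamma(j)})_{n_{ij}}=\chi(\gamma)^{-1}\,\phi_{\gamma,*}(y_i,y_j)_{n_{ij}}$, together with the analogous relation with $y_j-y_i$ in place of $y_j$ and $n'_{ij}$ in place of $n_{ij}$.

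Finally I would cancel $\phi_{\gamma,*}$ against the corestrictions by invoking conjugation-invariance of the transfer. Since $Q$ is geometrically integral, $k$ is algebraically closed in $k(Q)$, so the restriction $\Gamma_{k(Q)}\to\Gamma_k$ is surjective and $\gamma$ lifts to some $\tilde\gamma\in\Gamma_{k(Q)}$. Under the identification $\Br E_{ij}(Q)=H^2(\Gamma_{E_{ij}(Q)},\overline{k(Q)}^{*})$, the pushforward $\phi_{\gamma,*}$ is exactly the conjugation map $c_{\tilde\gamma}$, and $\Gamma_{E_{\gamma(i)\gamma(j)}(Q)}=\tilde\gamma\,\Gamma_{E_{ij}(Q)}\,\tilde\gamma^{-1}$. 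The standard identity $\cores^{G}_{gHg^{-1}}\circ c_{g}=\cores^{G}_{H}$ then gives $\cores_{E_{\gamma(i)\gamma(j)}/k}\circ\phi_{\gamma,*}=\cores_{E_{ij}/k}$. Combining this with the previous paragraph produces (a), and the same argument applied to the element of Lemma \ref{Lem1} (with $y_j-y_i$ and the primed data $E'_{ij}$, $n'_{ij}$) produces (b), the subscript $n_{\gamma(i)\gamma(j)}$ there being read as $n'_{\gamma(i)\gamma(j)}=n'_{ij}$.

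The only genuine obstacle is the twist bookkeeping in the middle step: every other ingredient is formal naturality of cohomological operations, but pinning down the \emph{exact} power of $\chi(\gamma)$ requires carefully tracking how $\gamma$ moves the fixed roots of unity $\zeta_m$ through the identification $(\Br\,)_m(1)=(\Br\,)_m$, which is precisely what yields the factor $\chi(\gamma)^{-1}$ in the statement.
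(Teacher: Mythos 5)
Your argument is correct and follows essentially the same route as the paper: transport the whole configuration by $\gamma$ via the induced isomorphism $E_{ij}(Q)\cong E_{\gamma(i)\gamma(j)}(Q)$ over $k(Q)$, extract the power of $\chi(\gamma)$ from the action of $\gamma$ on the fixed root of unity through the identification $(\Br\,)_n(1)=(\Br\,)_n$, and absorb the transport into the corestriction. The only (cosmetic) difference is that you justify $\cores_{E_{\gamma(i)\gamma(j)}/k}\circ\phi_{\gamma,*}=\cores_{E_{ij}/k}$ by conjugation-invariance of the group-cohomological transfer, whereas the paper deduces the same identity from the commutativity of the triangle of schemes $Q\times_k E_{ij}\to Q\times_k E_{\gamma(i)\gamma(j)}\to Q$.
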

	\begin{proof}
		To prove (a), consider the following commutative diagram:
		\begin{equation}\label{Eq2}
		% https://tikzcd.yichuanshen.de/#N4Igdg9gJgpgziAXAbVABwnAlgFyxMJZABgBpiBdUkANwEMAbAVxiRAEUACAHW7wFt4AfQDWnAKJDgWAFYBfEHNLpMufIRQAmclVqMWbLrwHCxk4LwDmdfvzoAKLAEorNu-ZlOFSldjwEiAEZSQN16ZlZEDkVdGChLeCJQADMAJwh+JDIQHAgkYL0ItkcoUldbOidFZRA0jKzqXKRtQoMotCFA6pT0zMQCpsQW8LaQDsCAchBqBjoAIxgGAAVVfw0QVKxLAAscGLkgA
		\begin{tikzcd}
		Q \times_k E_{ij} \arrow[rr, "{(id,\gamma)}"] \arrow[rd, "pr_1"] &   & Q \times_k E_{\gamma(i)\gamma(j)} \arrow[ld, "pr_1"'] \\
		& Q &
		\end{tikzcd}
		\end{equation}
		where the vertical morphisms are projections and the horizontal isomorphism is induced by conjugation by $\gamma$ on $E_{ij}$ (note that $\gamma(E_{ij})=E_{\gamma(i)\gamma(j)}$). We have 
		\begin{equation}\label{Eq4}
		(id,\gamma)_*(y_i,y_j)_{n_{ij}}=\chi(\gamma)^{-1}(y_{\gamma(i)},y_{\gamma(j)})_{\zeta,n_{\gamma(i)\gamma(j)}},
		\end{equation}
		as follows from the sequence of identities:
		$
		(id,\gamma)_*(y_i,y_j)_{n_{ij}}=(id,\gamma)_*(y_i,y_j)_{\zeta,n_{ij}}=(y_{\gamma(i)},y_{\gamma(j)})_{\gamma(\zeta),n_{ij}}=\chi(\gamma)^{-1}(y_{\gamma(i)},y_{\gamma(j)})_{\zeta,n_{\gamma(i)\gamma(j)}},
		$
		where $\zeta\coloneqq \zeta_{n_{ij}}$.
		By the commutativity of \eqref{Eq2}, we have $\cores_{E_{ij}/k}=\cores_{E_{\gamma(i)\gamma(j)}/k}\circ (id,\gamma)_*:\Br(Q \times_k E_{ij}) \to \Br Q$, and (a) follows from \eqref{Eq4}. Point (b) is proven analogously.
	\end{proof}
	
	\begin{proof}[Proof of Theorem \ref{Thm2}]
		The first part follows from Lemma \ref{Lem1} and Remark \ref{Rmk2}. 
		We prove the second part.	Let $m$ be a common multiple of all the $n_{ij}$'s. By \cite[Proposition 9.1.2(ii)]{BGbook}, we have 
		\begin{multline}\label{Eq3}
		(\Br \overline{Q})[m]=\bigoplus_{1\leq i < j \leq r} \Z/m\Z \cdot (y_i,y_j)_m=\bigoplus_{o \in O}(\Br \overline{Q})[m]_o,\\ (\Br \overline{Q})[m]_o \coloneqq \bigoplus_{\{i,j\} \in o} \Z/m\Z \cdot (y_{i},y_{j})_m, \ o \in O,
		\end{multline}
		where $\Gamma_k$ acts via $\gamma((y_i,y_j)_m)=\chi(\gamma)^{-1}(y_{\gamma(i)},y_{\gamma(j)})_m$. We emphasize that each summand $(\Br \overline{Q})[m]_o$ is $\Gamma_k$-invariant. 
		
		Fix $o \in O$, and let $i\coloneqq i(o),j\coloneqq j(o)$. Taking Galois invariants in \eqref{Eq3} shows that $(\Br \overline{Q})[m]_o^{\Gamma_k}$ is cyclic generated by the element 
		\begin{equation}\label{El1}
		\tag*{(\thesection.\arabic{equation})$_o$} \addtocounter{equation}{1}
		\sum_{\gamma \in \Gamma_K/\Gamma_{E_{ij}}} \chi(\gamma)^{-1} (y_{\gamma (i)},y_{\gamma (j)})_{n_{ij}}, \ \text{ if } E_{ij}=E'_{ij},
		\end{equation}
		and
		\begin{equation}\label{El2}
		\tag*{(\thesection.\arabic{equation})$_o$} \addtocounter{equation}{1}
		\sum_{\gamma \in \Gamma_K/\Gamma_{E'_{ij}}} \chi(\gamma)^{-1} (y_{\gamma (i)},y_{\gamma (j)})_{n'_{ij}}, \ \text{ if } E_{ij}/E'_{ij} \text{ is quadratic}.
		\end{equation}    
		Letting $m$ go multiplicatively to $\infty$, we infer that $(\Br \overline{Q})^{\Gamma_k}$ is a finite abelian group with a basis given by \ref{El1} and \ref{El2} as $o$ varies in $O$.
		
		The restriction of $\cores_{E_{ij}/k}(y_i,y_j)_{n_{ij}} \in \Br Q$ to $\Br Q_{\ok}$ is equal to \cite[Lemma 5.4.13]{BGbook}:
		\[
		\sum_{\gamma \in \Gamma_K/\Gamma_{E_{ij}}} \gamma( (y_{i},y_{j})_{n_{ij}})=\sum_{\gamma \in \Gamma_K/\Gamma_{E_{ij}}} \chi(\gamma)^{-1} (y_{\gamma (i)},y_{\gamma (j)})_{n_{ij}}.
		\]
		When $E_{ij}=E'_{ij}$, this is exactly \ref{El1}. Let now $i<j$ be such that $E_{ij}/E'_{ij}$ is quadratic. Then
		the restriction of $\operatorname{cores}_{E_{i j} / k}\left(y_i, y_j-y_i\right)_{n_{i j}^{\prime}}$ to $\Br Q_{\ok}$ is equal to:
		\begin{equation}\label{Eq3.5}
		\sum_{\gamma \in \Gamma_K/\Gamma_{E_{ij}}} \gamma( (y_{i},y_{j}-y_i)_{n'_{ij}}).
		\end{equation}
		Note that 
		\begin{align*}
		\sum_{\sigma \in \Gamma_{E'_{ij}}/\Gamma_{E_{ij}}} \sigma( (y_{i},y_{j}-y_i)_{n'_{ij}})
		&= (y_{i},y_{j}-y_i)_{n'_{ij}} + \chi(\sigma) (y_{j},y_{i}-y_j)_{n'_{ij}} \\ 
		&= (y_{i},y_{j}-y_i)_{n'_{ij}} - (y_{j},y_{i}-y_j)_{n'_{ij}} \\
		&= (y_{i},y_{j}-y_i)_{n'_{ij}} - (y_{j},y_{j}-y_i)_{n'_{ij}} \\
		&= (y_{i}/y_j,y_{j}-y_i)_{n'_{ij}} \\
		&= (y_{i}/y_j,y_{j}(1-y_i/y_j))_{n'_{ij}} \\
		&= (y_{i}/y_j,y_{j})_{n'_{ij}} =  (y_{i},y_{j})_{n'_{ij}},
		\end{align*}        
		where we used the definition of $n'_{ij}$ in the second identity, and the Steinberg relations $(a,1-a)=(a,-a)=(a,a)=0$ (the last holds because $-1 \in \ok^*$) in the rest. Thus \eqref{Eq3.5} becomes:
		\begin{multline*}
		\sum_{\gamma \in \Gamma_K/\Gamma_{E'_{ij}}} \sum_{\sigma \in \Gamma_{E'_{ij}}/\Gamma_{E_{ij}}}\gamma\sigma( (y_{i},y_{j}-y_i)_{n'_{ij}}) =\sum_{\gamma \in \Gamma_K/\Gamma_{E'_{ij}}}  \gamma( (y_{i},y_{j})_{n'_{ij}})\\ =\sum_{\gamma \in \Gamma_K/\Gamma_{E'_{ij}}} \chi(\gamma)^{-1} (y_{\gamma (i)},y_{\gamma (j)})_{n'_{ij}}.
		\end{multline*}
		So the restriction of the families (I) and (II) to $\Br Q_{\ok}$ gives exactly the basis of $(\Br Q_{\ok})^{\Gamma_k}$ formed by \ref{El1} and \ref{El2}.
	\end{proof}

	\section{Transcendental Brauer groups of real tori}\label{Sec4}

	\begin{theorem}\label{Thm3}
		Let $T$ be an algebraic torus over the real field $\R$. Then the natural map
		\[
		\Br T \to (\Br T_{\C})^{\Gal(\C/\R)}
		\]
		is surjective.
	\end{theorem}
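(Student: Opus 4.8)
\emph{Plan.} Write $\Gamma = \Gal(\C/\R) = \Z/2$. The idea is to reduce the surjectivity to the vanishing of a single differential in the Hochschild--Serre spectral sequence
\[
E_2^{p,q} = H^p(\Gamma, H^q_{\et}(T_\C, \G_m)) \Rightarrow H^{p+q}_{\et}(T, \G_m)
\]
of the Galois cover $T_\C \to T$, and then to compute that differential group-cohomologically. Since $T_\C \cong \G_{m,\C}^r$, we have $H^0_{\et}(T_\C, \G_m) = \C[T]^* = \C^* \oplus \widehat T$ (with $\widehat T$ the geometric character lattice, a $\Gamma$-module), $H^1_{\et}(T_\C, \G_m) = \operatorname{Pic} T_\C = 0$, and $H^2_{\et}(T_\C, \G_m) = \Br T_\C \cong \Lambda^2 \widehat T \otimes \qz(1)$ (cf.\ \eqref{Eq3}). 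The vanishing of the Picard group kills the entire row $q = 1$, so $E_2^{1,1} = E_2^{2,1} = 0$; hence $d_2^{0,2} = 0$, the differentials $d_r^{0,2}$ with $r \geq 4$ vanish for degree reasons, and $E_3^{3,0} = E_2^{3,0} = H^3(\Gamma, \C[T]^*)$. Thus $E_\infty^{0,2} = \ker\big(d_3^{0,2}\colon (\Br T_\C)^\Gamma \to H^3(\Gamma, \C[T]^*)\big)$, and the natural map $\Br T \to (\Br T_\C)^\Gamma$ has image exactly $E_\infty^{0,2}$. This is precisely the exact sequence \eqref{Eq15}, so the surjectivity of the theorem is \emph{equivalent} to $d_3^{0,2} = 0$, which is what I would aim to prove.

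First I would simplify the target. From $\C[T]^* = \C^* \oplus \widehat T$ we get $H^3(\Gamma, \C[T]^*) = H^3(\Gamma, \C^*) \oplus H^3(\Gamma, \widehat T)$; the first summand vanishes, since $H^3(\Gamma, \C^*) \cong H^1(\Gamma, \C^*) = 0$ by $2$-periodicity of the cohomology of $\Gamma \cong \Z/2$ and Hilbert 90. So $d_3^{0,2}$ lands in $H^3(\Gamma, \widehat T) \cong H^1(\Gamma, \widehat T)$, a nonzero $2$-torsion group in general. The vanishing is therefore not formal and requires a genuine computation of the differential.

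To carry out that computation I would pass to a purely group-theoretic spectral sequence. The geometric étale fundamental group $\pi := \pi_1^{\et}(T_\C) = T_* \otimes \widehat{\Z}(1)$ (with $T_*$ the cocharacter lattice) has continuous cohomology reproducing $H^*_{\et}(T_\C,-)$ on the coefficients at hand, and the compatibility result of the appendix identifies the relevant portion of the Hochschild--Serre spectral sequence above, together with its differentials, with the Lyndon--Hochschild--Serre spectral sequence of the profinite extension
\[
1 \to \pi \to \pi_1^{\et}(T) \to \Gamma \to 1.
\]
In this incarnation the differentials $d_2$ and $d_3$ are governed by the theory of characteristic classes of Charlap--Vasquez \cite{CV1}\cite{CV2}: $d_3^{0,2}$ is cup product against a universal class assembled from the extension class in $H^2(\Gamma, \pi)$ and the $\Gamma$-module structure of $\widehat T$, and Sah's result \cite{Sah1} gives a closed-form evaluation of this class on $E_3^{0,2} = H^0(\Gamma, \Lambda^2 \widehat T(1))$.

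The crux, and the step I expect to be the main obstacle, is to show that this characteristic class vanishes when $\Gamma$ is cyclic of order $2$. I expect the mechanism to mirror the cancellation already exploited in Lemma \ref{Lem1}: complex conjugation $\sigma$ acts on the Tate twist by $\chi(\sigma) = -1$, and Sah's formula should present $d_3^{0,2}$ as an odd-degree class into which $\sigma$ enters through a reflection of order $2$, forcing the relevant term to equal its own negative and hence to die in the $2$-torsion target. Concretely I would evaluate Sah's $d_3$-class directly for $Q = \Z/2$, using $2$-periodicity to reduce everything to degrees $\leq 2$ and tracking the sign $\chi(\sigma) = -1$ throughout; it is exactly here that cyclicity of $\Gamma_\R$ is indispensable, the general Charlap--Vasquez differential having no reason to vanish for a non-cyclic Galois group. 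Once $d_3^{0,2} = 0$ is established, the reduction of the first paragraph immediately yields the surjectivity of $\Br T \to (\Br T_\C)^\Gamma$, proving Theorem \ref{Thm3}.
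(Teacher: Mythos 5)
Your reduction is correct as far as it goes: with $\G_m$-coefficients the row $q=1$ of the Hochschild--Serre spectral sequence dies because $\Pic T_{\C}=0$, surjectivity is equivalent to the vanishing of $d_3^{0,2}\colon(\Br T_{\C})^{\Gamma}\to H^3(\Gamma,\C[T]^*)$ with $\Gamma=\Gal(\C/\R)$, and the target does reduce to $H^1(\Gamma,\widehat T)$, so the vanishing is genuinely non-formal. The gap is in your plan for killing that differential. The Charlap--Vasquez class $v_2$, and Sah's vanishing theorem for it over a cyclic group of order $2$, compute the \emph{second}-page differential $d_2^{0,2}\colon H^2(N,M)^{\pi}\to H^2(\pi,H^1(N,M))$ for a split extension $N\rtimes\pi$ and a $\pi$-module $M$; they say nothing about $d_3^{0,2}$. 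In your $\G_m$-picture the differential that $v_2$ controls is precisely the one that is already zero for trivial reasons, and the one you actually need is not reached by the results you cite. Moreover, the comparison with the Lyndon--Hochschild--Serre spectral sequence of $1\to\pi_1(T_{\C})\to\pi_1(T)\to\Gamma\to 1$ (the appendix together with \cite[Proposition 3.4]{GP}) applies only to locally constant torsion coefficients arising from discrete $\pi_1$-modules; $\G_m$ is not such a sheaf. Already $H^0(T_{\C},\G_m)=\C[T]^*$ is not the invariants of a discrete $\pi_1(T)$-module, and $H^1(\pi_1(T_{\C}),\C^*)\neq 0=\Pic T_{\C}$, so the ``identification of the relevant portion of the spectral sequence together with its differentials'' that you invoke does not exist in this form.

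The missing idea is a change of coefficients. Since $\Pic T_{\C}$ is torsion (indeed zero), a Kummer-sequence argument gives $\Br T=H^2(T,\mu_{\infty})$ and $\Br T_{\C}=H^2(T_{\C},\mu_{\infty})$, and one should instead run the Hochschild--Serre spectral sequence with $\mu_{\infty}$-coefficients. There $H^1(T_{\C},\mu_{\infty})=\Hom(\pi_1(T_{\C}),\mu_{\infty})\neq 0$, and the entire obstruction to surjectivity is concentrated in $d_2^{0,2}$ (the group $E_2^{3,0}=H^3(\Gamma,\mu_{\infty})\cong H^1(\Gamma,\mu_{\infty})$ vanishes because conjugation inverts roots of unity and $\mu_\infty$ is divisible, so $d_3$ contributes nothing). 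Because $\mu_{\infty}$ is a locally constant torsion sheaf, the comparison with the fundamental-group extension is now legitimate, and $d_2^{0,2}$ is exactly the differential governed by $v_2$. One then uses the identity section of $T$ to split the extension, replaces the profinite kernel $\pi_1(T_{\C})$ by the discrete lattice $X_*(T)(1)$ (which computes the same cohomology with $\mu_\infty$-coefficients), and applies Charlap--Vasquez and Sah to get $v_2=0$ for $\pi\cong\Z/2\Z$. Your instinct about where cyclicity of $\Gamma_{\R}$ enters is right, but the cancellation lives in Sah's computation of $v_2$, not in a formula for $d_3$.
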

	
	To prove this, we use the theory of Charlap--Vasquez \cite{CV1}\cite{CV2}. Let $G$ be a semi-direct product $N \rtimes \pi$, where $N$ is a free finitely generated abelian group, and $\pi$ acts by automorphisms. Then Charlap and Vasquez construct {\em characteristic classes}
	\[
	v_2^n(N) \in H^2(\pi, H^{n-1}(N, H_n(N,\Z)))
	\]
	and relate them to the Hochschild-Serre differentials (see {\em loc.cit.}).
	We only need a relatively simple case of their theory (Theorem \ref{ThmCV} below), which may be proven without the lengthy computations of \cite{CV1}\cite{CV2}. 
	
	Since $N$ is $\Z$-free, the universal coefficient theorem gives an identification of $\pi$-modules
	\[
	H^2(N,M) = \Hom(H_2(N),M), \ \  H_2(N) \coloneqq H_2(N,\Z),
	\]
	and thus of groups
	\begin{equation}\label{Eq6}
	H^2(N,M)^{\pi} = \Hom_{\pi}(H_2(N),M).
	\end{equation}
	
	% which induces a bilinear map of $\pi$-modules
	% \begin{equation}\label{Eq6}
	%     H^2(N,M) \otimes  H^1(N, H_2(N, \Z)) \to H^1(N,M).
	% \end{equation}
	
	\begin{theorem}[Charlap-Vasquez]\label{ThmCV}
		There exists a class
		\[
		v_2 =v_2(N) \in H^2(\pi, H^1(N, H_2(N)))
		\]
		such that, for any $\pi$-module $M$, the Hochschild-Serre differential 
		\[
		d_2^{0,2}:H^2(N,M)^{\pi} \to H^2(\pi,H^1(N,M))
		\]
		sends $\alpha$ to $\tilde \alpha_*v_2$, where $\tilde \alpha \in \Hom_{\pi}(H_2(N),M)$ is the element corresponding to $\alpha$ under the identification \eqref{Eq6}.
	\end{theorem}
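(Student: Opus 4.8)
The plan is to prove this by a universality (naturality) argument, reducing the computation of $d_2^{0,2}$ on an arbitrary coefficient module to its value on a single tautological class. First I would record that, since $N$ is $\Z$-free, the universal coefficient isomorphism $H^2(N,M) \cong \Hom(H_2(N),M)$ underlying \eqref{Eq6} is natural in the $\pi$-module $M$ and $\pi$-equivariant: all the maps involved are functorial, and the vanishing of the relevant $\mathrm{Ext}$-term is automatic because $H_1(N)=N$ is free. The key idea is then to take $M_0 \coloneqq H_2(N)$ as coefficient module. Under \eqref{Eq6} the group $H^2(N,H_2(N))^{\pi} = \Hom_{\pi}(H_2(N),H_2(N))$ contains the identity, which corresponds to a canonical class $\beta \in H^2(N,H_2(N))^{\pi}$, and I would simply define
\[
v_2 \coloneqq d_2^{0,2}(\beta) \in H^2(\pi, H^1(N,H_2(N))),
\]
which lands in exactly the required group.

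The crux of the argument is the naturality of the Hochschild--Serre spectral sequence of the fixed extension $1 \to N \to G \to \pi \to 1$ with respect to morphisms of coefficient modules. Concretely, for a map of $\pi$-modules $f\colon M \to M'$, inflated to $G$-modules on which $N$ acts trivially, one obtains a morphism of spectral sequences whose effect on $E_2^{p,q} = H^p(\pi, H^q(N,-))$ is induced by $f$, and which therefore commutes with every differential. Applying this with $f = \tilde\alpha\colon H_2(N) \to M$ — a $\pi$-module morphism precisely because $\alpha$ is $\pi$-invariant — produces a commutative square whose top row is $d_2^{0,2}$ for the coefficients $H_2(N)$, whose bottom row is $d_2^{0,2}$ for the coefficients $M$, and whose vertical arrows are the maps $\tilde\alpha_*$ induced on $E_2^{0,2}$ and on $E_2^{2,1} = H^2(\pi,H^1(N,M))$.

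To finish I would chase $\beta$ around this square. On the left-hand side, naturality of \eqref{Eq6} identifies $\tilde\alpha_*$ on $H^2(N,-)^{\pi}$ with post-composition by $\tilde\alpha$ in $\Hom_{\pi}(H_2(N),-)$, so $\tilde\alpha_*\beta$ corresponds to $\tilde\alpha \circ \mathrm{id} = \tilde\alpha$, i.e. to $\alpha$ itself; on the right-hand side $\tilde\alpha_*$ is the pushforward $H^2(\pi,H^1(N,H_2(N))) \to H^2(\pi,H^1(N,M))$ appearing in the statement. Commutativity then yields
\[
d_2^{0,2}(\alpha) = d_2^{0,2}(\tilde\alpha_*\beta) = \tilde\alpha_*\, d_2^{0,2}(\beta) = \tilde\alpha_* v_2,
\]
as desired. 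I expect the main obstacle to be bookkeeping rather than conceptual: one must check that the functoriality of the universal coefficient isomorphism in $M$ matches, on the edge term $E_2^{0,2} = H^2(N,M)^{\pi}$, the functoriality of the spectral sequence in the coefficients, so that the single symbol $\tilde\alpha_*$ denotes compatible maps on both sides of the square. Pinning down this compatibility, together with the standard but unglamorous naturality of the Hochschild--Serre spectral sequence in the coefficient module, is the only genuine work.
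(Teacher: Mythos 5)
Your argument is correct and is essentially identical to the paper's own proof: both take the tautological class corresponding to $\mathrm{id} \in \Hom_{\pi}(H_2(N),H_2(N))$ with coefficients in $M_{univ}=H_2(N)$, define $v_2$ as its image under $d_2^{0,2}$, and conclude by naturality of the Hochschild--Serre spectral sequence in the coefficient module together with a diagram chase. Your additional remarks on the $\pi$-equivariance and naturality of the universal coefficient isomorphism only make explicit what the paper leaves implicit.
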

	\begin{proof}
		This is just a consequence of the naturality of the Hochschild-Serre spectral sequence. Namely, 
		let $M_{univ}$ be the $\pi$-module $H_2(N, \Z)$, and $\id \in H^2(N,M_{univ})=\Hom_\pi(H_2(N),H_2(N))$ be the element corresponding to the identity. Note that $\alpha=\tilde \alpha_*\id$. By naturality of the Hochschild-Serre spectral sequence we have a commutative diagram:
		\[
		% https://tikzcd.yichuanshen.de/#N4Igdg9gJgpgziAXAbVABwnAlgFyxMJZABgBoBGAXVJADcBDAGwFcYkQAJAPQCYAKAHKkAsgEouAHQlosIAL6l0mXPkIpyFanSat23flJmlu5QSNGj5ikBmx4CRMsS0MWbRJ15nhAfWDMwLFo5cUNZBSU7VSINZxpXXQ99PjDjLlMhX39A4It5LRgoAHN4IlAAMwAnCABbJB4aHAgkMm03dilGehqAIyh6HwAqKwrqusRWpqRyCJAq2vrG5sQAZln58ZWl6fiddxBO7r6B4blKOSA
		\begin{tikzcd}
		{H^2(N,M_{univ})^\pi} \arrow[d, "\tilde \alpha_*"] \arrow[r, "d_2^{0,2}"] & {H^2(\pi,H^1(N,M_{univ}))} \arrow[d, "\tilde \alpha_*"] \\
		{H^2(N,M)^\pi} \arrow[r, "d_2^{0,2}"]                               & {H^2(\pi,H^1(N,M))}                              
		\end{tikzcd}
		\]
		
		Let $v_2(N) \coloneqq d(\id)\in {H^2(\pi,H^1(N,M_{univ}))} $. The statement follows by a diagram chase.
	\end{proof}
	
	The following proposition is a special case of \cite[Example 3]{CV1}, where, however, a complete proof is not presented (only hinted at).
	
	\begin{proposition}[Charlap-Vasquez]\label{Prop1}
		Assume that $\pi\cong C_2$ is the cyclic group of order $2$, then the class $v_2(N)$ is trivial.
	\end{proposition}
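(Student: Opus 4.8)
The plan is to compute the class $v_2 = d_2^{0,2}(\id) \in H^2(C_2, H^1(N, \Lambda^2 N))$ directly through the zig-zag defining the Hochschild--Serre differential, and to exhibit a cochain-level reason for its vanishing. Here $H_2(N) = \Lambda^2 N$ since $N$ is free abelian, and I write $M := \Lambda^2 N$. I work in the double complex $C^p(C_2, C^q(N, M))$ computing the spectral sequence of $1 \to N \to N \rtimes C_2 \to C_2 \to 1$, with $\sigma$ the generator of $C_2$. First I represent $\id \in H^2(N, M)^{C_2}$ by the bilinear $2$-cocycle $c(a,b) = \sum_{i<j} x_i(a)\, x_j(b)\, e_i \wedge e_j$, for a chosen $\Z$-basis $e_1,\dots,e_n$ of $N$ with coordinate functions $x_i$; its antisymmetrization is $a\wedge b$, so under $H^2(N,M)=\Hom(\Lambda^2 N,M)$ it corresponds to $\id$. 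Running the zig-zag, the first step produces $B := \sigma\cdot c - c \in C^2(N,M)$, a coboundary because $[c]$ is $C_2$-invariant; choosing $y_0 \in C^1(N,M)$ with $d^{(N)} y_0 = B$ (the cochain differential of $N$), the class $v_2$ is represented by the $C_2$-$2$-cocycle whose value at $(\sigma,\sigma)$ is $N_\sigma y_0 := y_0 + \sigma\cdot y_0$. Under $H^2(C_2, A) = A^{C_2}/N_\sigma A$ with $A = H^1(N,M)$, this says $v_2 = [N_\sigma y_0]$.

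The key structural facts are that $B$ is a \emph{symmetric} bilinear form valued in $M$ (its antisymmetrization is $\sigma\cdot\id - \id = 0$) and that it is $\sigma$-anti-invariant, $\sigma\cdot B = -B$ (as $\sigma^2$ acts trivially on $c$). For symmetric $B$, the quadratic cochain $y_0(n) := -\tfrac12 B(n,n)$ formally satisfies $d^{(N)} y_0 = B$, and anti-invariance of $B$ then forces $\sigma\cdot y_0 = -y_0$, whence $N_\sigma y_0 = 0$ and $v_2 = 0$. Thus the whole argument reduces to one integrality point: that $B(n,n) \in 2\Lambda^2 N$ for all $n$, so that $y_0$ is an honest integral cochain. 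I expect this integrality to be the main obstacle, and it is precisely where the hypothesis $\pi \cong C_2$ enters: the naive multiplicative (Leibniz) argument, applied to the tautological $\id_N \in H^1(N,N)$ whose differential is the (trivial, since split) extension class, only yields $2 v_2 = 0$, which is vacuous because $H^2(C_2,-)$ is already $2$-torsion.

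To settle integrality I would pass to a well-chosen basis, using that $v_2$ is intrinsic and computable from any representative. By the classification of integral $C_2$-representations, $N \cong \Z_+^{\,a} \oplus \Z_-^{\,b} \oplus \Z[C_2]^{\,c}$, where $\Z_\pm$ are the rank-one trivial and sign lattices and $\Z[C_2]$ the regular one; in the adapted basis $\sigma$ acts by a signed permutation, $\sigma(e_i) = \pm e_{\tau(i)}$ for an involution $\tau$. Writing $B_{ij} := B(e_i,e_j)$, one has $B(n,n) \equiv \sum_i B_{ii}\, x_i(n) \pmod 2$, so it suffices to see each diagonal term is even. Since $c(e_i,e_i) = 0$ and $\sigma(e_i)$ is a single basis vector up to sign, one computes $B_{ii} = \sigma\big(c(\sigma e_i,\sigma e_i)\big) - c(e_i,e_i) = 0$ in this basis, so in fact $B(n,n) = 2\sum_{i<j} B_{ij}\, x_i(n) x_j(n)$ is even. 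Hence $y_0$ is integral and anti-invariant, and $v_2 = 0$.

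Two routine verifications underpin the above and I would state them explicitly: that a symmetric bilinear $M$-valued cochain on the free abelian group $N$ is always a coboundary (equivalently $\mathrm{Ext}^1_{\Z}(N,M)=0$), which both guarantees a primitive $y_0$ and confirms that the quadratic formula solves $d^{(N)} y_0 = B$; and the identification of $d_2^{0,2}$ with the displayed zig-zag together with $H^2(C_2,A)\cong A^{C_2}/N_\sigma A$. With these in hand, the mathematical content is concentrated entirely in the anti-invariance of $B$ and the existence of the signed-permutation basis.
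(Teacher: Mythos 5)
Your proof is correct, but it takes a genuinely different route from the paper. The paper reduces to indecomposable $C_2$-lattices via the classification $N\cong \Z^a\oplus\Z(1)^b\oplus\Z[C_2]^c$ together with the additivity $v_2(N_1\oplus N_2)=v_2(N_1)\oplus v_2(N_2)$ from Charlap--Vasquez, disposes of the rank-one summands by $H_2=0$, and quotes Sah's theorem for the rank-two case; no cochain computation is made. You instead compute $d_2^{0,2}(\id)$ directly: the key points — that $B=\sigma\cdot c-c$ is symmetric (its antisymmetrization is $\sigma\cdot\id-\id=0$) and anti-invariant, that the quadratic primitive $y_0(n)=-\tfrac12 B(n,n)$ is then anti-invariant so the norm $N_\sigma y_0$ vanishes on the nose, and that integrality of $y_0$ holds because $B_{ii}=0$ in a signed-permutation basis — all check out (I verified in particular that in a non-adapted basis, e.g.\ $\sigma e_1=e_1$, $\sigma e_2=e_1-e_2$ on $\Z^2$, the diagonal entries can indeed be odd, so the passage to the adapted basis is genuinely needed and is legitimate since $v_2$ is independent of the representing cocycle). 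Both arguments ultimately rest on the classification of integral $C_2$-representations, but yours trades the two external inputs (the additivity theorem and Sah's rank-two computation) for the standard identification of $d_2^{0,2}$ with the double-complex zig-zag for the split extension $N\rtimes C_2$, which you should indeed state and justify as you propose. Your approach is more self-contained and makes visible exactly where cyclicity of order $2$ enters (the vanishing of the norm of an anti-invariant cochain), which complements the paper's remark that its proof relies on this cyclicity; the paper's approach is shorter on the page but leans on the cited computations.
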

	
	% To prove the proposition, we use a result of Sah [...]. Let $G$ be the universal split extension of rank $2$:
	% \begin{equation}\label{EqExtension}
	%     1 \to \Z^2 \to G \to GL_2(\Z) \to 1.
	% \end{equation}
	% Then Sah [...] proves that $H^2(GL_2(\Z),\Hom(\Lambda^2\Z^2, \Z^2))=0$. In particular, the {\em universal characteristic class} $v_2 \in H^2(GL_2(\Z),\Hom(\Lambda^2\Z^2, \Z^2))$ of rank $2$ associated to \eqref{EqExtension} is trivial.
	
	\begin{proof}
		Any $C_2$-module $N$ decomposes as a direct sum of the following three modules:
		\begin{itemize}
			\item $\Z$, with trivial $C_2$-action;
			\item $\Z(1)$, i.e.\ the abstract group $\Z$ endowed with the involution $n \mapsto -n$;
			\item $\Ind^{C_2}_{<id>}\Z$.
		\end{itemize}
		Since $v_2(N_1\oplus N_2)=v_2(N_1) \oplus v_2(N_2)$ (see \cite[Theorem 7]{CV2}), we may assume without loss of generality that $N$ is one of the three $C_2$-modules listed above, and thus has rank $\leq 2$. 
		If $N$ has rank $1$, then $H_2(N)=0$, and so $v_2=0$ trivially. On the other hand, if $N$ has rank $2$, then Sah \cite[Theorem 12, section I]{Sah1} shows that $v_2=0$.
	\end{proof}

	\begin{proof}[Proof of Theorem \ref{Thm3}]
            The Hochschild-Serre spectral sequence of $T/\R$ gives an exact sequence
            \[
            H^2(T,\mu_{\infty}) \to {H^2(\bar T,\mu_{\infty})^{\Gal(\C/\R)}} \to[{d_{2,T}^{0,2}}]  {H^2(\Gal(\C/\R),H^1(\bar T,\mu_{\infty}))},
            \]
            where ${d_{2,T}^{0,2}}$ denotes the spectral sequence differential. Since the Picard group of tori is torsion, a Kummer sequence argument shows that  $H^2(T,\mu_{\infty})=H^2(T,\G_m)=\Br T$ and the analog for $\bar T$ (see \cite[Théorème 3.1]{BrauerII}). It thus suffices to show that  ${d_{2,T}^{0,2}}$ vanishes.
		Consider the short exact sequence \cite[Proposition 5.6.1]{szamuely}
		\begin{equation}\label{Eq5}
		1 \to \pi_1(T_{\C}) \to \pi_1(T) \to \Gal(\C/\R),
		\end{equation}
            where, letting $e: \Spec \R \to T$ be the the identity of $T$ and  $\bar e: \Spec \C \to T$ be the corresponding geometric, $\pi_1(T)$ and $\pi_1(T_{\C})$ denote the étale fundamental groups $\pi_1(T,\bar e)$ and $\pi_1(T_{\C},\bar e)$. The point $e$ induces by functoriality a section $\Gal(\C/\R) =\pi_1(e,\bar e) \to \pi_1(T)$.
            
            As explained in more detail in a general setting in the appendix, there are maps (arising from natural transformations of derived functors):
            \[
            N_T:H^n(\pi_1(T),\mu_{\infty}) \to H^n(T, \mu_{\infty}), \ \  N_{T_{\C}}:H^n(\pi_1(T_{\C}),\mu_{\infty}) \to H^n(T_{\C}, \mu_{\infty}),
            \]
            for all $n\geq 0$. The map $N_{T_{\C}}$ is an isomorphism by \cite[Proposition 3.4]{GP} (in fact, $N_T$ is an isomorphism as well, but we do not need this, and it is not proven in {\em loc.cit.}).
            Using Corollary \ref{Cor2}, we may compare the Hochschild-Serre spectral sequences of the group extension \eqref{Eq5} and of $T/\R$, obtaining a commutative diagram:
		\[
		% https://tikzcd.yichuanshen.de/#N4Igdg9gJgpgziAXAbVABwnAlgFyxMJZABgBpiBdUkANwEMAbAVxiRAAkA9AJgAoAdfmiwB9AIwD+AIzoAnAAQAVAJSlBAWyYjggrGABmOAJ4BfZZx38A4o0kBhAPSCASspMgTpdJlz5CKMXIqWkYWNi4+QRsGeyd+V1IuCUFhcUkZBRU1fk1tXQNjM2UPLxAMbDwCIjIxYPpmVkQOHnS5JWzcyz1DU3NLaNiXNxLvCr8iQNrqerCmiMkBwUchxM5k6TbFDq0ugt7ik2CYKABzeCJQfVkIdSQyEBwIJG5p0MaQADltRTz+OxN3J5LtdbohAg8nogAMyvBpsL7AH6Wf6A0pXG53aiPJDgmbvKAWF7EEzaF4pUQSFSo4EYxAvCFIGEhOFNAnAIkk9mkRSAigmIA
            \begin{tikzcd}[column sep = huge]
                {H^2(\pi_1(\bar T),\mu_{\infty})^{\Gal(\C/\R)}} \arrow[d, "\sim"] \arrow[r, "{d^{0,2}_{2,\pi_1(T)}}"] & {H^2(\Gal(\C/\R),H^1(\pi_1(\bar T),\mu_{\infty}))} \arrow[d, "\sim"] \\
                {H^2(\bar T,\mu_{\infty})^{\Gal(\C/\R)}} \arrow[r, "{d^{0,2}_{2,T}}"]                                       & {H^2(\Gal(\C/\R),H^1(\bar T,\mu_{\infty}))}.
            \end{tikzcd}
		\]
            It remains to show that $d_{2,\pi_1(T)}^{0,2}$ vanishes. We have a canonical identification \cite[Corollary 2.10]{GP}
		\[
		\pi_1(T_{\C})= X_*(T) \otimes_{\Z}\widehat \Z(1) = (X_*(T) (1) )\otimes \widehat \Z,
		\]
		where $X_*(T)(1) \coloneqq X_*(T) \otimes \Z(1)$ (note that the Tate twist $\Z(1)$ is defined only because our base field is $\R$). Letting $G \coloneqq (X_*(T)(1)) \rtimes \Gal(\C/\R)$, we may compare the profinite split extension \eqref{Eq5} to a discrete one, i.e.\  we have a commutative diagram:
		\begin{equation*}%\label{Eq11}
		% https://tikzcd.yichuanshen.de/#N4Igdg9gJgpgziAXAbVABwnAlgFyxMJZARgBpiBdUkANwEMAbAVxiRAB120sB9YMmKU50AxkxwwABFJwBfABQAVPpwDCsgJQhZpdJlz5CKAEzkqtRizadufAUPajxUmQsUbJ23SAzY8BIgBmM2p6ZlZEDnYAcUZ5NQB6TgAlLR09P0MiMgAGczCrSIANHgAqJQ15YjTvXwMAk1I80MsIkGivDPqjZGDmi3DrGLjElJqu-x6cpvzWtmJOn31JomnKFsHIhfSlzIbkABYZjcKQbdrlrJQj9YHT7fMYKABzeCJQADMAJwgAWyQAKzUHAQJCmO5tAC8i2+fyQRxAIKQZERdCwDDYAAsIBAANYwn7-RDBRGgxDTVHorE4-E7WFEgBswLJgTphKQJKRiAObLh3OZgN5RKBpKQAA4hUgAOwC8mS8my4jylFc4zy8FcgCcsgosiAA
		\begin{tikzcd}
		1 \arrow[r] & X_*(T)(1) \arrow[d, hook] \arrow[r]      & G \arrow[d, hook] \arrow[r]          & \Gal(\C/\R) \arrow[d, "="] \arrow[r] & 1 \\
		1 \arrow[r] & {\pi_1(T_{\C})} \arrow[r] & {\pi_1(T) } \arrow[r] & \Gal(\C/\R) \arrow[r]                & 1.
		\end{tikzcd}
		\end{equation*}
		Comparing the Hochschild-Serre spectral sequences of these two group extensions, we get a commutative diagram
		\[
		% https://tikzcd.yichuanshen.de/#N4Igdg9gJgpgziAXAbVABwnAlgFyxMJZABgBoBGAXVJADcBDAGwFcYkQAJAPQCYAKADoC0WAPrlBAgEb0ATgAIAKgEpSQgLbNRwIVjAAzHAE8Avsq46BAcSaSAwgHohAJWUmQJ0uky58hFOQU1HRMrOzc-EI2jPZOAq6k3BJCIuKSMgoqagKa2roGxmbKHl4gGNh4BERkxMEMLGyInLx8ABqiAFR8KnzkqhpalnqGpuaW0bEubiXeFX5EgbU09WFNEZITQo5TiVwS7V09fdm5QwWjxSbBMFAA5vBEoPqyEOpIZCA4EEiBIQ3sUG0PFIxBMvBmIGeryQwM+30QAGZlqFGiBAcBgaDwZ4ni83ohYV93sj-k0hNg3jjIXikEi4T8SasQOSsJTKCYgA
		\begin{tikzcd}
		{H^2(X_*(T)(1),\mu_{\infty})^{\Gal(\C/\R)}} \arrow[r, "{d_{2,G}^{0,2}}"] \arrow[d, "\sim", leftarrow] & {H^2(\Gal(\C/\R),H^1(X_*(T)(1),\mu_{\infty}))} \arrow[d, "\sim", leftarrow] \\
		{H^2(\pi_1(T_{\C}),\mu_{\infty})^{\Gal(\C/\R)}} \arrow[r, "{d_{2,\pi_1(T)}^{0,2}}"]               & {H^2(\Gal(\C/\R),H^1(\pi_1(T_{\C}),\mu_{\infty}))},
		\end{tikzcd}
		\]
		where the vertical maps are isomorphisms because $H^n(\pi_1(T_{\C}),\mu_{\infty})=\Hom(\Lambda^n(X_*(T) \otimes \widehat \Z(1)),\mu_{\infty})=\Hom(\Lambda^n(X_*(T)(1)),\mu_{\infty})$ for all $n \geq 1$ \cite[Theorem V.6.4, Exercise III.1.3]{Brown}. The sought vanishing now follows from the Charlap-Vesquez theory (see Proposition \ref{Prop1}).
	\end{proof}
	
	\begin{proof}[{Proof of Theorem \ref{Thm1}}]
        \noindent (i). This follows from Theorem \ref{Thm2}.

        \noindent (ii). As mentioned in the introduction, the exact sequence
	\[
	\Br T \to (\Br \bar T)^{\Gamma_k} \to[d_3^{0,2}] H^3(k,\bar k[T]^*)
	\]
	induced by the Hochschild-Serre spectral sequence (note that $\Pic \bar T=0$ and thus the second page differential $d_2^{0,2}$ vanishes), proves the theorem when $k$ is a local non-archimedean field (as the strict cohomological dimension of these fields is $2$ \cite[Theorem 10.6]{HarariBook}), and it reduces the case of number fields to the case of $\R$ because of the Poitou-Tate isomorphism $H^3(k,\ok[T]^*)= \bigoplus_{\substack{v \in M_K\\v \text{ real}}}H^3(k_v,\ok_v[T]^*)$ \cite[Theorem 17.3(a)]{HarariBook}, in which case the result follows from Theorem \ref{Thm3}.
	\end{proof}
	
	\appendix
	
	\section{Functoriality of Grothendieck's spectral sequence}\label{App}
	
	The purpose of this appendix is to prove the naturality of the Grothendieck spectral sequence with respect to exact functors. This is merely an exercise in homological algebra, but the author was unable to find a precise statement in the literature. We borrow from \cite[Sec.\ 10]{Weibel} our notation for derived categories and definition of derived functors:
	
	\vskip3mm
	
	{\bf Notation for derived categories.} For an abelian category $\cA$, we denote by $\operatorname{Ch}(\cA)$ the category of chain complexes, by $\cK(\cA)$ the category whose objects are chain complexes of $\cA$ and morphisms are chain homotopy equivalence classes of morphisms in $\operatorname{Ch}(\cA)$, and by $\cD(\cA)$ the derived category of $\cA$, i.e.\ the localization of $\cK(\cA)$ with respect to quasi-isomorphisms. We denote by $\operatorname{Ch}^+(\cA) \subset \operatorname{Ch}(\cA), \cK^+(\cA) \subset \cK(\cA), \cD^+(\cA) \subset \cD(\cA)$ the full subcategories where complexes are bounded from below. 

        \vskip3mm

        {\bf Compositions.} To lighten the notation:
        \begin{itemize} \vspace{-0.9mm} \setlength\itemsep{0.1em}
            \item  we denote compositions of functors $G \circ F$ by $GF$;
            \item when we have a composition of functors $GF$ and a natural transformation $N:F \to F'$, we denote the induced natural transformation $GF \to GF'$ again by $N$. (Same for $G$ and multiple compositions.)
        \end{itemize}
    	
	\vskip3mm
	
	{\bf Derived functors. }Let  $F:\cA \to \cB$ be a left exact additive morphism  between abelian categories. With a slight abuse of notation, we denote with $F:\cK^+(\cA) \to \cK^+(\cB)$ the functor $(C^{\bullet}) \mapsto (F(C^{\bullet}))$. A {\em right derived functor} of $F$ is a morphism $RF:\cD^+(\cA) \rightarrow$ $\cD^+(\cB)$ of triangulated categories, together with a natural transformation $\xi$ from $q F: \cK^+(\cA) \rightarrow {\cK^+( B )} \rightarrow \cD^+(\cB)$ to $(RF) q: \cK^+(\cA) \rightarrow \cD^+(\cA) \rightarrow \cD^+(\cB)$ which is universal in the sense that if ${G}: \cD^+(\cA) \rightarrow \cD^+(\mathcal{B})$ is another morphism equipped with a natural transformation $\zeta: q F \Rightarrow	G q$, then there exists a unique natural transformation $\eta: {RF} \Rightarrow	{G}$ such that $\zeta$ is the composition $qF \xRightarrow{\xi} (RF)q \xRightarrow{\eta} Gq$. When there might be risk of misinterpretation, we write $R(F)$ instead of $RF$.

        \vskip3mm

        {\bf Compositions of derived functors.} Let $F:\cA \to \cB, G:\cB \to \cC$ be additive left exact functors among abelian categories. Assume that both $F, G$ and $GF$ posses derived functors, then it follows from the universal property of $R(FG)$ that there exists a unique natural transformation $\zeta=\zeta_{G,F}:R(GF) \to R(G)R(F)$ of functors from $\cD^+(\cA)$ to $\cD^+(\cC)$ such that $q(GF) \xRightarrow{\xi_{GF}} R(GF)q$ coincides with $qGF \xRightarrow{\xi_F\xi_G} R(G)R(F)q\xRightarrow{\zeta} R(GF)q$ \cite[Section 10.8]{Weibel}. When $F$ sends injectives to $G$-acyclics, $\zeta$ is an isomorphism\cite[Theorem 10.8.2]{Weibel}, inducing an identification $R(GF)=R(G)R(F)$.

        \vskip3mm
	
	Let $\cA, \cB, \cA', \cB'$ be abelian categories, and let 
	\[\begin{tikzcd}
	\cA & \cB \\
	{\cA'} & {\cB'}
	\arrow["F", from=1-1, to=1-2]
	\arrow["{\kappa_{\cA}}"', from=1-1, to=2-1]
	\arrow[Rightarrow, "N"{description}, from=1-2, to=2-1]
	\arrow["{\kappa_{\cB}}", from=1-2, to=2-2]
	\arrow["{F'}"', from=2-1, to=2-2]
	\end{tikzcd}\]
	be a $2$-commutative diagram, in the sense that the double arrow denotes a natural transformation $N:\kappa_{\cB}   F \Rightarrow	F'   \kappa_{\cA}$. Assume that the right derived functors of $F,F', \kappa_{\cA}$ and $\kappa_{\cA'}$ exist. 
	
	\begin{lemma}
		Assume that $\cA$ and $\cA'$ have enough injectives, and that $F$ sends injectives into $\kappa_{\cB}$-acyclics. Then  there is a unique natural transformation $R(F)R(\kappa_B) \to R({\kappa_{\cA}}  ) R(F')$ (which we still denote with $N$ with an innocous abuse of notation) as in the following diagram:
		% https://q.uiver.app/#q=WzAsNCxbMCwwLCJcXGNEXisoXFxjQSkiXSxbMSwwLCJcXGNEXisoXFxjQikiXSxbMCwxLCJcXGNEXisoXFxjQScpIl0sWzEsMSwiXFxjRF4rKFxcY0InKSJdLFswLDIsIlJcXGthcHBhX3tcXGNBfSIsMl0sWzAsMSwiUkYiXSxbMiwzLCJSRiciLDJdLFsxLDIsIiIsMSx7InN0eWxlIjp7ImJvZHkiOnsibmFtZSI6InNxdWlnZ2x5In19fV0sWzEsMywiUlxca2FwcGFfe1xcY0J9IiwyXV0=
		\[\begin{tikzcd}
		{\cD^+(\cA)} & {\cD^+(\cB)} \\
		{\cD^+(\cA')} & {\cD^+(\cB')}
		\arrow["RF", from=1-1, to=1-2]
		\arrow["{R\kappa_{\cA}}"', from=1-1, to=2-1]
		\arrow[Rightarrow, "N"{description}, from=1-2, to=2-1]
		\arrow["{R\kappa_{\cB}}", from=1-2, to=2-2]
		\arrow["{RF'}"', from=2-1, to=2-2]
		\end{tikzcd}\]
		that $2$-commutes with the diagram
		% https://q.uiver.app/#q=WzAsNCxbMCwwLCJcXGNLXisoXFxjQSkiXSxbMSwwLCJcXGNLXisoXFxjQikiXSxbMCwxLCJcXGNLXisoXFxjQScpIl0sWzEsMSwiXFxjS14rKFxcY0InKSJdLFswLDIsIlxca2FwcGFfe1xcY0F9IiwyXSxbMCwxLCJGIl0sWzIsMywiRiciLDJdLFsxLDIsIiIsMSx7InN0eWxlIjp7ImJvZHkiOnsibmFtZSI6InNxdWlnZ2x5In19fV0sWzEsMywiXFxrYXBwYV97XFxjQn0iLDJdXQ==
		\[\begin{tikzcd}
		{\cK^+(\cA)} & {\cK^+(\cB)} \\
		{\cK^+(\cA')} & {\cK^+(\cB')}
		\arrow["F", from=1-1, to=1-2]
		\arrow["{\kappa_{\cA}}"', from=1-1, to=2-1]
		\arrow[Rightarrow, "N"{description},from=1-2, to=2-1]
		\arrow["{\kappa_{\cB}}", from=1-2, to=2-2]
		\arrow["{F'}"', from=2-1, to=2-2]
		\end{tikzcd}\]
		and the localizations $q$ and natural transformations $\xi$.
	\end{lemma}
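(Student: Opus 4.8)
The plan is to construct the desired $2$-cell by invoking the universal property of a single right derived functor, namely $R(\kappa_{\cB}F)$, using the acyclicity hypothesis only to identify its source with the top-right composite $R\kappa_{\cB}\,RF$ of the square. First I would note that, since $\kappa_{\cB}$ and $F$ are left exact and $\cA$ has enough injectives, the composite $\kappa_{\cB}F\colon\cA\to\cB'$ is left exact and $R(\kappa_{\cB}F)$ exists. Because $F$ carries injectives to $\kappa_{\cB}$-acyclics, \cite[Theorem 10.8.2]{Weibel} supplies a canonical isomorphism $\zeta_{\kappa_{\cB},F}\colon R(\kappa_{\cB}F)\xRightarrow{\sim}R\kappa_{\cB}\,RF$ compatible with the universal transformations $\xi$ (this is the one and only place where the acyclicity hypothesis enters, and it is exactly what lets me treat $R\kappa_{\cB}\,RF$ as a genuine derived functor whose universal property I can use).

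Next I would assemble, at the level of functors $\cK^+(\cA)\to\cD^+(\cB')$, the natural transformation
\[
\zeta\colon\ q(\kappa_{\cB}F)\ \xRightarrow{qN}\ q(F'\kappa_{\cA})\ \xRightarrow{\xi_{F'}}\ (RF')\,q\,\kappa_{\cA}\ \xRightarrow{\xi_{\kappa_{\cA}}}\ (RF')(R\kappa_{\cA})\,q,
\]
where the first arrow is the localization of the given $N\colon\kappa_{\cB}F\Rightarrow F'\kappa_{\cA}$, the second is $\xi_{F'}$ whiskered on the right by $\kappa_{\cA}$, and the third is $\xi_{\kappa_{\cA}}$ whiskered on the left by $RF'$. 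The point to stress here is that I invoke the universal transformations $\xi_{F'}$ and $\xi_{\kappa_{\cA}}$ only to produce a \emph{one-sided} transformation landing in the composite $RF'\,R\kappa_{\cA}$; I do not attempt to identify $RF'\,R\kappa_{\cA}$ with $R(F'\kappa_{\cA})$, so no acyclicity condition on $\kappa_{\cA}$ is required.

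Setting $G\coloneqq RF'\,R\kappa_{\cA}$, the universal property of $R(\kappa_{\cB}F)$ then yields a unique $\eta\colon R(\kappa_{\cB}F)\Rightarrow G$ for which $\zeta$ is the composition $q(\kappa_{\cB}F)\xRightarrow{\xi_{\kappa_{\cB}F}}R(\kappa_{\cB}F)\,q\xRightarrow{\eta}G\,q$. I would define the sought transformation as $N\coloneqq\eta\circ(\zeta_{\kappa_{\cB},F})^{-1}\colon R\kappa_{\cB}\,RF\Rightarrow RF'\,R\kappa_{\cA}$, filling the displayed square. Uniqueness is then immediate: the required $2$-commutativity with the $\cK^+$-diagram, the localizations $q$, and the transformations $\xi$ is precisely the factorization $\zeta=(\eta\,q)\circ\xi_{\kappa_{\cB}F}$ after transport along $\zeta_{\kappa_{\cB},F}$ (using that $\xi_{\kappa_{\cB}F}$ corresponds to $\xi_{\kappa_{\cB}}\xi_{F}$ under $\zeta_{\kappa_{\cB},F}$, per \cite[Section 10.8]{Weibel}), so any candidate must equal $\eta$ by the uniqueness clause of the universal property.

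The main obstacle I anticipate is bookkeeping rather than conceptual: one must verify that the three whiskered arrows above compose to a well-defined natural transformation of functors $\cK^+(\cA)\to\cD^+(\cB')$, and that the compatibility clause of \cite[Section 10.8]{Weibel} matches, term for term, the $2$-commutativity demanded in the statement. The conceptual subtlety worth flagging is the asymmetry of the hypotheses — the acyclicity of $F$ is used to realize the \emph{source} $R\kappa_{\cB}\,RF$ as a derived functor (so that its universal property is available), while the \emph{target} $RF'\,R\kappa_{\cA}$ is reached purely formally through the $\xi$'s, with no analogous condition on $\kappa_{\cA}$.
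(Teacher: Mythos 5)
Your proposal is correct and follows essentially the same route as the paper: both use the acyclicity hypothesis to identify $R\kappa_{\cB}\,RF$ with $R(\kappa_{\cB}F)$ via $\zeta_{\kappa_{\cB},F}$, and then obtain existence and uniqueness from the universal property of $R(\kappa_{\cB}F)$ applied to the composite transformation built from $N$ and the $\xi$'s. Your write-up merely spells out the intermediate whiskering steps that the paper leaves implicit.
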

	
	With the last sentence, we mean here that the cubic diagram
	% https://q.uiver.app/#q=WzAsOCxbMSwyLCJcXGNLXisoXFxjQSkiXSxbMiwyLCJcXGNLXisoXFxjQikiXSxbMCwzLCJcXGNLXisoXFxjQScpIl0sWzMsMywiXFxjS14rKFxcY0InKSJdLFsxLDAsIlxcY0ReKyhcXGNBKSJdLFsyLDAsIlxcY0ReKyhcXGNCKSJdLFswLDEsIlxcY0ReKyhcXGNBJykiXSxbMywxLCJcXGNEXisoXFxjQicpIl0sWzAsMiwiXFxrYXBwYV97XFxjQX0iLDJdLFswLDEsIkYiXSxbMiwzLCJGJyIsMl0sWzEsMiwiIiwxLHsic3R5bGUiOnsiYm9keSI6eyJuYW1lIjoic3F1aWdnbHkifX19XSxbMSwzLCJcXGthcHBhX3tcXGNCfSIsMl0sWzQsNiwiUlxca2FwcGFfe1xcY0F9IiwyXSxbNSw3LCJSXFxrYXBwYV97XFxjQn0iXSxbNiw3LCJSRiciLDJdLFs0LDUsIlJGIl0sWzUsNiwiIiwxLHsic3R5bGUiOnsiYm9keSI6eyJuYW1lIjoic3F1aWdnbHkifX19XSxbMiw2LCJxIiwxXSxbMSw1LCJxIiwxXSxbMyw3LCJxIiwxXSxbMCw0LCJxIiwxXV0=
	\begin{equation}\label{Eq8}
	\begin{tikzcd}
	& {\cD^+(\cA)} & {\cD^+(\cB)} \\
	{\cD^+(\cA')} &&& {\cD^+(\cB')} \\
	& {\cK^+(\cA)} & {\cK^+(\cB)} \\
	{\cK^+(\cA')} &&& {\cK^+(\cB')}
	\arrow["RF", from=1-2, to=1-3]
	\arrow["{R\kappa_{\cA}}"', from=1-2, to=2-1]
	\arrow["{R\kappa_{\cB}}", from=1-3, to=2-4]
	\arrow["q"{ pos=0.2}, from=3-2, to=1-2]
	\arrow["F", from=3-2, to=3-3]
	\arrow["{\kappa_{\cA}}"', from=3-2, to=4-1]
	\arrow["q"{ pos=0.2}, from=3-3, to=1-3]
	\arrow[Rightarrow, from=3-3, to=4-1]
	\arrow["{\kappa_{\cB}}", from=3-3, to=4-4]
	\arrow["q", from=4-1, to=2-1]
	\arrow["{F'}", from=4-1, to=4-4]
	\arrow["q", from=4-4, to=2-4]
	\arrow[Rightarrow, crossing over, from=1-3, to=2-1]
	\arrow["{RF'}", crossing over, from=2-1, to=2-4]
	\end{tikzcd}
	\end{equation}
	where the (undrawn) natural transformations $\xi$ appear on the lateral faces, is $2$-commutative. Equivalently, that the following diagram commutes:
        % https://q.uiver.app/#q=WzAsNCxbMCwwLCJxXFxrYXBwYV97XFxjQn1GIl0sWzEsMCwicUYnXFxrYXBwYV97XFxjQX0iXSxbMCwxLCJSKFxca2FwcGFfe1xcY0J9KVIoRilxIl0sWzEsMSwiUihGJylSKFxca2FwcGFfe1xcY0F9KXEiXSxbMCwxLCJOIl0sWzEsMywiXFx4aV97Rid9XFx4aXtcXGthcHBhX3tcXGNBfX0iXSxbMCwyLCJcXHhpX3tGfVxceGlfe1xca2FwcGFfe1xcY0J9fSIsMl0sWzIsMywiTiIsMl1d
        \[\begin{tikzcd}
        	{q\kappa_{\cB}F} & {qF'\kappa_{\cA}} \\
        	{R(\kappa_{\cB})R(F)q} & {R(F')R(\kappa_{\cA})q}
        	\arrow["N", from=1-1, to=1-2]
        	\arrow["{\xi_{F}\xi_{\kappa_{\cB}}}"', from=1-1, to=2-1]
        	\arrow["{\xi_{F'}\xi{\kappa_{\cA}}}", from=1-2, to=2-2]
        	\arrow["N"', from=2-1, to=2-2]
        \end{tikzcd}\]
	
	\begin{proof}
		Since $F$ sends injectives in $\kappa_{\cB}$-acyclics, the natural transformation $\zeta_{\kappa_{\cB},F}:R(\kappa_{\cB}   F) \linebreak \Rightarrow	R(\kappa_{\cB})R(F)$  is an isomorphism and it $2$-commutes with $q$'s and $\xi$'s. Thus we may equivalently prove that there exists a unique natural transformation $R(\kappa_{\cB}F) \Rightarrow R(\kappa_{\cA})R(F')$ that $2$-commutes with $q$'s and $\xi$'s and the natural transformation $\kappa_{\cB}F \Rightarrow \kappa_{\cA}F':\cK^+(\cA) \to \cK^+(\cB')$. This follows from the universal property of the derived functor $R(F\kappa_{\cB})$.
	\end{proof}
	
	Consider now a $2$-commutative diagram
	   % https://q.uiver.app/#q=WzAsNixbMCwwLCJcXGNBIl0sWzEsMCwiXFxjQiJdLFsyLDAsIlxcY0MiXSxbMCwxLCJcXGNBJyJdLFsxLDEsIlxcY0InIl0sWzIsMSwiXFxjQyciXSxbMCwzLCJcXGthcHBhX3tcXGNBfSIsMl0sWzIsNSwiXFxrYXBwYV97XFxjQ30iLDJdLFswLDEsIkYiXSxbMSwyLCJHIl0sWzMsNCwiRiciLDJdLFs0LDUsIkcnIiwyXSxbMSwzLCJOX0YiLDEseyJzdHlsZSI6eyJib2R5Ijp7Im5hbWUiOiJzcXVpZ2dseSJ9fX1dLFsyLDQsIk5fRyIsMSx7InN0eWxlIjp7ImJvZHkiOnsibmFtZSI6InNxdWlnZ2x5In19fV0sWzEsNCwiXFxrYXBwYV97XFxjQn0iLDJdXQ==
        \begin{equation}\label{Eq7}\begin{tikzcd}[column sep=large]
            \cA & \cB & \cC \\
            {\cA'} & {\cB'} & {\cC'}
            x+\arrow["F", from=1-1, to=1-2]
            \arrow["{\kappa_{\cA}}"', from=1-1, to=2-1]
            \arrow["G", from=1-2, to=1-3]
            \arrow["{N_F}"{description}, Rightarrow, from=1-2, to=2-1]
            \arrow["{\kappa_{\cB}}"', from=1-2, to=2-2]
            \arrow["{N_G}"{description}, Rightarrow, from=1-3, to=2-2]
            \arrow["{\kappa_{\cC}}"', from=1-3, to=2-3]
            \arrow["{F'}"', from=2-1, to=2-2]
            \arrow["{G'}"', from=2-2, to=2-3]
            \end{tikzcd}
        \end{equation}
	where $\cA, \cB, \cC$ and  $\cA', \cB', \cC'$ are abelian categories of which $\cA, \cB$ and  $\cA', \cB'$ have enough injectives. All the functors (straight lines) appearing in the diagram are additive and left exact, and suppose that $F$ (resp.\ $F'$) sends injectives in $\kappa_{\cB}$-acyclics (resp.\ $\kappa_{\cC}$-acyclics). The double lines denote natural transformations $N_F:\kappa_{\cB}   F \Rightarrow	F'   \kappa_{\cA}$ and  $N_G:\kappa_{\cC}   G \Rightarrow	G'   \kappa_{\cB}$. The composition of $N_F$ and $N_G$ defines a natural transformation
	\[
	N_{GF}\coloneqq N_F   N_G:\kappa_{\cC} GF \xRightarrow{N_G} G'\kappa_{\cB}F \xRightarrow{N_F} G'F' \kappa_{\cA}.
	\]
	Note that all functors in the diagram possess derived functors, as do the compositions $GF$ and $G'F'$. 
 
	\begin{proposition}\label{Prop2}
		The diagram
		% https://q.uiver.app/#q=WzAsNixbMCwxLCJcXGNEXisoXFxjQSkiXSxbMSwwLCJcXGNEXisoXFxjQikiXSxbMiwxLCJcXGNEXisoXFxjQykiXSxbMCwyLCJcXGNEXisoXFxjQScpIl0sWzEsMywiXFxjRF4rKFxcY0InKSJdLFsyLDIsIlxcY0ReKyhcXGNDJykiXSxbMCwzLCJSXFxrYXBwYV97XFxjQX0iLDJdLFsyLDUsIlJcXGthcHBhX3tcXGNDfSJdLFswLDEsIlJGIl0sWzEsMiwiUkciXSxbMyw0LCJSRiciLDJdLFs0LDUsIlJHJyIsMl0sWzEsMywiUk5fRiIsMSx7ImxhYmVsX3Bvc2l0aW9uIjo3MCwic3R5bGUiOnsiYm9keSI6eyJuYW1lIjoic3F1aWdnbHkifX19XSxbMiw0LCJSTl9HIiwxLHsibGFiZWxfcG9zaXRpb24iOjcwLCJzdHlsZSI6eyJib2R5Ijp7Im5hbWUiOiJzcXVpZ2dseSJ9fX1dLFsxLDQsIlJcXGthcHBhX3tcXGNCfSIsMSx7ImxhYmVsX3Bvc2l0aW9uIjoxMH1dLFswLDIsIlIoR0YpIiwxLHsibGFiZWxfcG9zaXRpb24iOjMwfV0sWzMsNSwiUihGRykiLDEseyJsYWJlbF9wb3NpdGlvbiI6MzB9XSxbMiwzLCJSTl97R0Z9IiwxLHsibGFiZWxfcG9zaXRpb24iOjIwLCJzdHlsZSI6eyJib2R5Ijp7Im5hbWUiOiJzcXVpZ2dseSJ9fX1dXQ==
		\begin{equation}\label{Eq9}
		\begin{tikzcd}
		& {\cD^+(\cB)} \\
		{\cD^+(\cA)} && {\cD^+(\cC)} \\
		{\cD^+(\cA')} && {\cD^+(\cC')} \\
		& {\cD^+(\cB')}
		\arrow["RG", from=1-2, to=2-3]
		\arrow["{N_F}"{description, pos=0.7}, Rightarrow, from=1-2, to=3-1]
		\arrow["{\kappa_{\cB}}"{description, pos=0.1}, from=1-2, to=4-2]
		\arrow["RF", from=2-1, to=1-2]
		\arrow["{\kappa_{\cA}}"', from=2-1, to=3-1]
		\arrow["{\kappa_{\cC}}", from=2-3, to=3-3]
		\arrow["{N_G}"{description, pos=0.7}, Rightarrow, from=2-3, to=4-2]
		\arrow["{RF'}"', from=3-1, to=4-2]
		\arrow["{RG'}"', from=4-2, to=3-3]
		\arrow["{N_{GF}}"{description, pos=0.8}, crossing over, Rightarrow, from=2-3, to=3-1]
		\arrow["{R(G'F')}"{description, pos=0.3}, crossing over, from=3-1, to=3-3]
		\arrow["{R(GF)}"{description, pos=0.8}, crossing over, from=2-1, to=2-3],
		\end{tikzcd}
		\end{equation}
		where the (undrawn) natural tranformations $\zeta:R(GF) \Rightarrow R(G)R(F)$ and $\zeta:R(G'F') \Rightarrow R(G')R(F')$ appear at the top and bottom triangles,
		is $2$-commutative.
	\end{proposition}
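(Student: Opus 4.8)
The plan is to reduce the $2$--commutativity of \eqref{Eq9} to the \emph{uniqueness} clause in the universal property of a single derived functor, in the same spirit as the proof of the preceding Lemma. The prism \eqref{Eq9} asserts the equality of two natural transformations $\Phi,\Psi\colon R\kappa_{\cC}\,R(GF)\Longrightarrow R(G')R(F')R\kappa_{\cA}$ of functors $\cD^+(\cA)\to\cD^+(\cC')$: the \emph{front} one $\Phi$, obtained by pasting the top triangle $\zeta_{G,F}\colon R(GF)\Rightarrow R(G)R(F)$ with the derived square--fillers $N_G$ and $N_F$ furnished by the Lemma, and the \emph{back} one $\Psi$, obtained by pasting the derived square--filler $N_{GF}$ (the Lemma applied to the composite square, with functors $GF,G'F'$ and underived transformation $N_{GF}=N_F N_G$) with the bottom triangle $\zeta_{G',F'}$. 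I will exhibit a single transformation out of $R\kappa_{\cC}R(GF)$ characterised by a universal property, and check that both $\Phi$ and $\Psi$ realise it.

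First I would identify the common source. Under the acyclicity hypotheses in force the comparison map $\zeta_{\kappa_{\cC},GF}$ of \cite[Theorem 10.8.2]{Weibel} is an isomorphism, giving a canonical identification $R\kappa_{\cC}\,R(GF)\cong R(\kappa_{\cC}GF)$, which carries the structural transformation $\xi_{\kappa_{\cC}GF}\colon q\,\kappa_{\cC}GF\Rightarrow R(\kappa_{\cC}GF)\,q$. I then equip the target functor $R(G')R(F')R\kappa_{\cA}$ with the transformation
\[
\theta\colon q\,\kappa_{\cC}GF\xRightarrow{\ q(N_{GF})\ } q\,G'F'\kappa_{\cA}\xRightarrow{\ \xi_{G'}\xi_{F'}\xi_{\kappa_{\cA}}\ } R(G')R(F')R\kappa_{\cA}\,q,
\]
the second arrow being the iterated $\xi$--comparison. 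By the universal property of $R(\kappa_{\cC}GF)$ there is a \emph{unique} natural transformation $R(\kappa_{\cC}GF)\Rightarrow R(G')R(F')R\kappa_{\cA}$ compatible with $\xi_{\kappa_{\cC}GF}$ and $\theta$; it therefore suffices to verify that both $\Phi$ and $\Psi$ enjoy this compatibility.

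For the back composite $\Psi$ this is almost immediate: the Lemma guarantees that $N_{GF}$ is $\xi$--compatible with the underived $N_{GF}$, and the defining property of $\zeta_{G',F'}$ in \cite[Section 10.8]{Weibel} makes $\xi_{G'F'}\xi_{\kappa_{\cA}}$ factor through $\xi_{G'}\xi_{F'}\xi_{\kappa_{\cA}}$ via $\zeta_{G',F'}$; pasting the two squares yields exactly $\theta$. For the front composite $\Phi$ one pastes the $\xi$--compatibility squares of $N_F$ and $N_G$ (each supplied by the Lemma) with the $\xi$--compatibility of $\zeta_{G,F}$; since the underived $N_{GF}$ was \emph{defined} as the horizontal composite $N_F N_G$, the resulting $\cK^+(\cA)$--level transformation is again $\theta$. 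Uniqueness then forces $\Phi=\Psi$, which is the asserted $2$--commutativity.

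The step I expect to be the main obstacle is organizational rather than conceptual: correctly tracking the many whiskered structural transformations $\xi_F,\xi_G,\xi_{GF},\xi_{\kappa_{\cA}},\xi_{\kappa_{\cB}},\xi_{\kappa_{\cC}}$ together with their primed analogues, and checking that the large pasting diagram—where the two instances of the Lemma meet the comparison transformations $\zeta_{G,F}$ and $\zeta_{G',F'}$—commutes strictly. Every constituent square commutes by a cited result, so the chase does go through; the care lies in the coherence bookkeeping, in particular in ensuring that $\zeta_{\kappa_{\cC},GF}$, $\zeta_{G,F}$ and $\zeta_{G',F'}$ interact coherently with the iterated $\xi$--comparisons, as provided by \cite[Section 10.8]{Weibel}.
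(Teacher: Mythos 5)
Your proposal is correct and follows essentially the same route as the paper: the paper's (one-line) proof likewise identifies $R(\kappa_{\cC})R(GF)$ with $R(\kappa_{\cC}GF)$ and invokes the uniqueness clause of its universal property to conclude that the two composite transformations to $R(G')R(F')R(\kappa_{\cA})$ coincide. Your write-up merely makes explicit the verification that both composites are $\xi$-compatible with the same transformation $\theta$ at the level of $\cK^+(\cA)$, which is exactly the bookkeeping the paper leaves implicit.
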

	
	Equivalently, the above proposition says that the following diagram commutes:
        \begin{equation}\label{Eq19}
             % https://tikzcd.yichuanshen.de/#N4Igdg9gJgpgziAXAbVABwnAlgFyxMJZABgBoBGAXVJADcBDAGwFcYkQAlACgB0eBremjT0A+sD4BjAMIBfAJTcA4oq4AxeSFml0mXPkIoyxanSat23PoOFiJPGQuUatOkBmx4CRchVMMWNkROLiUAclVrIRFxKQAhJ3VNbV1PAyIAJj8aAItg5QjuNULeAWi7KQBBBVdU-W8ULJMc8yCQ8OLIsttYh2rk0xgoAHN4IlAAMwAnCABbJF8QHAgkMjNA9j4ALxgcOyVSNVlakGm51ZplpCz1vJAAOVElE7P5xBurxABmFo3gx7ULxmbwALJcVt9fndtrt9mFDmFjilTsCFuCkGDbm1HsAlEctJRZEA
            \begin{tikzcd}
            R(\kappa_{\cC})R(GF) \arrow[Rightarrow,d, "{\zeta_{G,F}}"] \arrow[Rightarrow,rr, "N_{GF}"] &                                           & R(G'F')R(\kappa_{\cA}) \arrow[Rightarrow,d, "{\zeta_{G',F'}}"] \\
            R(\kappa_{\cC})R(G)R(F) \arrow[Rightarrow,r, "N_G"]                             & R(G')R(\kappa_{\cB})R(F) \arrow[Rightarrow,r, "N_F"] & R(G')R(F')R(\kappa_{\cA}).                        
            \end{tikzcd}
        \end{equation}
	
	\begin{proof}
		The universal property of $R(\kappa_{\cC}GF)$ shows that the two natural transformations $R(\kappa_{\cC}GF) \cong R(\kappa_{\cC})R(GF) \Rightarrow R(G')R(F')R(\kappa_{\cA})$ arising from diagram \eqref{Eq19} coincide.
	\end{proof}

        We further assume now that $F$ and $F'$ send injectives in injectives, and that all the the functors $\kappa$ ($\kappa_{\cA}, \kappa_{\cB},$ and $\kappa_{\cC}$) are exact. In particular, the functors $\kappa$ are their own derived functors, i.e.\ $R\kappa=\kappa$.
	
	Let $E_r^{p,q}(A)$ be the Grothendieck spectral sequence of the composition $GF$ for an object $A \in \cA$ (see \cite[Sec.s 5.8,10.8]{Weibel}). Recall that $E_2^{p,q}=R^pG(R^qF(A))$, and that the sequence converges to $R^{p+q}(GF)(A)$. For $A \in \cA'$, we denote by $\tilde E_r^{p,q}(A)$ the Grothendieck spectral sequence of the composition $G'F'$. 
	
	\begin{corollary}\label{Cor1}
		For all $A \in \cA$, there is a morphism of spectral sequences:
		\[
		\kappa_{\cC}(E_r^{p,q}(A)) \to \tilde E_r^{p,q}(\kappa_{\cA}(A)),
		\]
		natural in $A$, that coincides with
		\[
		(\kappa_{\cC} R^pG R^qF)(A) \to[N_FN_G] (R^pG R^qF \kappa_{\cA})(A)
		\]
		on the second page and such that the diagram
		% https://q.uiver.app/#q=WzAsNCxbMCwwLCJcXGthcHBhX3tcXGNDfShFX3Jee3AscX0pIl0sWzAsMSwiXFx0aWxkZSBFX3Jee3AscX0iXSxbMSwwLCJcXGthcHBhX3tcXGNDfVJee3ArcX0oR0YpKEEpIl0sWzEsMSwiUl57cCtxfShHJ0YnKShBKSJdLFsyLDNdLFswLDFdLFswLDIsIiIsMSx7ImxldmVsIjoyfV0sWzEsMywiIiwxLHsibGV2ZWwiOjJ9XV0=
		\[\begin{tikzcd}
		{\kappa_{\cC}(E_r^{p,q}(A))} & {\kappa_{\cC}R^{p+q}(GF)(A)} \\
		{\tilde E_r^{p,q}(\kappa_{\cA}(A))} & {R^{p+q}(G'F')(\kappa_{\cA}(A))}
		\arrow[Rightarrow, from=1-1, to=1-2]
		\arrow[from=1-1, to=2-1]
		\arrow["N_{GF}",from=1-2, to=2-2]
		\arrow[Rightarrow, from=2-1, to=2-2]
		\end{tikzcd}\]
		commutes.
	\end{corollary}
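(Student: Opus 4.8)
The plan is to realise both spectral sequences as coming from a single honest morphism of double complexes in $\cC'$, and then to apply the standard fact that a map of double complexes induces a map of the associated spectral sequences, compatible with filtrations and abutments. Concretely, fix $A \in \cA$. First I would choose an injective resolution $A \to I^\bullet$ in $\cA$ and an injective resolution $\kappa_{\cA}(A) \to I'^\bullet$ in $\cA'$. Since $\kappa_{\cA}$ is exact, $\kappa_{\cA}(I^\bullet)$ is a (not necessarily injective) resolution of $\kappa_{\cA}(A)$, so the comparison theorem for injective resolutions produces a chain map $\phi \colon \kappa_{\cA}(I^\bullet) \to I'^\bullet$ over $\mathrm{id}_{\kappa_{\cA}(A)}$, unique up to homotopy. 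Because $F$ and $F'$ send injectives to injectives, $F(I^\bullet)$ and $F'(I'^\bullet)$ are bounded-below complexes of injectives, and composing the natural transformation $N_F$ with $F'(\phi)$ gives a chain map $\psi \colon \kappa_{\cB}F(I^\bullet) \to F'(I'^\bullet)$ in $\cK^+(\cB')$ which represents the derived natural transformation $N_F$.

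Next I would choose Cartan--Eilenberg resolutions $F(I^\bullet) \to M^{\bullet\bullet}$ and $F'(I'^\bullet) \to M'^{\bullet\bullet}$ by injectives. The exactness of $\kappa_{\cB}$ guarantees that $\kappa_{\cB}(M^{\bullet\bullet})$ is again a Cartan--Eilenberg resolution of $\kappa_{\cB}F(I^\bullet)$: its terms are no longer injective, but all the exactness conditions on cycles, boundaries and cohomology are preserved by an exact functor. Since $M'^{\bullet\bullet}$ is a Cartan--Eilenberg resolution \emph{by injectives}, the comparison theorem lifts $\psi$ to a morphism of double complexes $\Phi \colon \kappa_{\cB}(M^{\bullet\bullet}) \to M'^{\bullet\bullet}$, again unique up to homotopy. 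Applying $G'$ and precomposing with $N_G$ yields a morphism of double complexes in $\cC'$,
\[
\kappa_{\cC}\,G(M^{\bullet\bullet}) \xRightarrow{N_G} G'\,\kappa_{\cB}(M^{\bullet\bullet}) \xrightarrow{G'(\Phi)} G'(M'^{\bullet\bullet}).
\]
Because $\kappa_{\cC}$ is exact it commutes with the formation of total complexes, filtrations and all subquotients, so the spectral sequence attached to the source is canonically $\kappa_{\cC}(E_r^{p,q}(A))$, while that attached to $G'(M'^{\bullet\bullet})$ is $\tilde E_r^{p,q}(\kappa_{\cA}(A))$. The morphism above therefore induces the sought map of spectral sequences.

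It then remains to identify this map on the second page and on the abutment. On $E_2 = R^pG(R^qF(-))$, the map is computed by passing to vertical cohomology: the contribution of $\phi$ and $N_F$ realises precisely the natural transformation $\kappa_{\cB}R^qF(A) \to R^qF'(\kappa_{\cA}(A))$ of the Lemma (this is the statement that $\psi$ represents $N_F$ in $\cD^+(\cB')$), and applying $R^pG$ together with $N_G$ then gives $N_FN_G$, as claimed. On the abutment $R^{p+q}(GF)(A) \to R^{p+q}(G'F')(\kappa_{\cA}(A))$, the induced map is the composition-of-derived-functors natural transformation, which by Proposition \ref{Prop2} (equivalently the commuting square \eqref{Eq19}) is $N_{GF}$; this yields the asserted commutative square and its compatibility with the filtration.

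Finally, well-definedness and naturality in $A$ follow from the homotopy-uniqueness built into every comparison step: different choices of $I^\bullet, I'^\bullet, \phi, M^{\bullet\bullet}, M'^{\bullet\bullet}, \Phi$ differ by chain homotopies, which induce the zero map on all pages $E_r$ with $r \geq 1$, so the morphism of spectral sequences is independent of the choices, and a morphism $A_1 \to A_2$ in $\cA$ lifts compatibly up to homotopy through all the chosen resolutions, giving naturality. The main obstacle I anticipate lies in Steps two and three: lifting $\psi$ to a genuine map of Cartan--Eilenberg double complexes while keeping careful track of the exactness (but not injectivity) of the $\kappa$-images, and then verifying that the resulting $E_2$-map is literally $N_FN_G$ rather than merely agreeing with it up to the abutment. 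This is precisely where Proposition \ref{Prop2} is indispensable for the abutment, while the page-level identification requires the direct cohomology computation sketched above.
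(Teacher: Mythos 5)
Your proposal is correct and follows essentially the same route as the paper: apply the exact functor $\kappa$ to a Cartan--Eilenberg resolution of $F(I^\bullet)=RF(A)$, observe that the result is still a resolution in the generalized (non-injective) sense, lift the chain map representing $N_F$ to a map of double complexes into the Cartan--Eilenberg resolution of $RF'(\kappa_{\cA}A)$, apply $G'$ and precompose with $N_G$, and identify the abutment via Proposition \ref{Prop2}. The lifting step you flag as the main obstacle is precisely the paper's Lemma \ref{Lem3}, proved by the same argument as the classical Cartan--Eilenberg comparison theorem.
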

	
	We use the following in the proof:
	\begin{definition}
		Let $\cA$ be an abelian category and $C^{\bullet} \in \operatorname{Ch}^+(\cA)$. A {\em resolution} $K^{\bullet\bullet}$ of $C^{\bullet}$ is an upper half-plane double complex in $\cA$ endowed with an augmentation map $\epsilon:C^{\bullet}\to K^{\bullet,0}$ such that, denoting by 
        \begin{itemize}
            \item $\mathrm{B}^{\bullet,\bullet}(K)$ the horizontal coborders $\mathrm{B}_1^{p,q}(K) \coloneqq \im (\delta_{hor}:K^{p-1,q} \to K^{p,q})$, and 
            \item $\mathrm{H}^p(K^{\bullet\bullet})$ the horizontal cohomology  $\mathrm{H}_1^{p,q}(K) \coloneqq \Ker  (\delta_{hor}:K^{p,q} \to K^{p+1,q})/ \im (\delta_{hor}:K^{p-1,q} \to K^{p,q})$,
        \end{itemize}
        we have, for every integer $p$:
		\begin{itemize}
			\item the map $B_1^p(\epsilon): B^p(C)[0] \to B_1^{p,\bullet}(K)$ is a resolution;
			\item the map $H_1^p(\epsilon): H^p(C)[0] \to H_1^{p,\bullet}(K )$ is a resolution.
		\end{itemize}
	\end{definition}

        Recall that a {\em} resolution of an object $A \in \cA$ is a quasi-isomorphism $A[0] \to K^{\bullet}$, where $K^{\bullet}$ is a chain complex concentrated in positive degree \cite[Definition 2.3.5]{Weibel}.
	A {\em Cartan-Eilenberg resolution} of a complex $C^{\bullet}$ is a resolution $C^{\bullet} \to J^{\bullet \bullet}$, where all the entries, the horizontal coborders and the horizontal cohomologies are injective \cite[Chapter XVII]{CE} \cite[Definition 5.7.1]{Weibel}.

	\begin{lemma}\label{Lem3}
		Let $C^{\bullet} \to \tilde C^{\bullet}$ be a morphism of positively bounded complexes in an abelian category, $C^{\bullet} \to K^{\bullet\bullet}$ be a resolution,
		and $\tilde C^{\bullet} \to \tilde J^{\bullet\bullet}$ be a Cartan-Eilenberg resolution. Then there exists a morphism of double complexes $\phi:K^{\bullet\bullet} \to J^{\bullet\bullet}$ that fits in a commutative diagram
		\[
		% https://tikzcd.yichuanshen.de/#N4Igdg9gJgpgziAXAbVABwnAlgFyxMJZARgBpiBdUkANwEMAbAVxiRAB128HYACAYQB6wTgCMmDBjBwBfEDNLpMufIRQAGclVqMWbISPbjJ0uQqXY8BIpvXb6zVohABpYWIlScHk7PmKQDEtVIjI7agc9Z05uWGAAKRl3I09pHy8zbRgoAHN4IlAAMwAnCABbJDIQHAgkdXMQEvLK6hqkACYGporETWraxABmLtKe9taBweoGLDAnECgIHBxskAjdec40AAsseQoZIA
		\begin{tikzcd}
		K^{\bullet\bullet} \arrow[r, "\phi", dotted] & \tilde{J}^{\bullet\bullet}   \\
		C^{\bullet} \arrow[r] \arrow[u]              & \tilde C^{\bullet} \arrow[u]
		\end{tikzcd}
		\]
	\end{lemma}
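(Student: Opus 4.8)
The plan is to construct $\phi$ by climbing the horizontal coboundary filtration of the two double complexes, exploiting an asymmetry: every lifting that the argument requires is an extension of a map \emph{into} an injective object of the target $\tilde J$, so only injectivity of $\tilde J$ is ever used and the hypothesis that $K$ be merely a resolution (not a Cartan--Eilenberg resolution) is harmless. First I would record the structural facts on both sides. On the source side, writing $\mathrm{Z}_1^{p,\bullet}(K)$ for the horizontal cocycle columns, the two defining conditions of a resolution together with the short exact sequences of columns
\[
0 \to \mathrm{B}_1^{p,\bullet}(K) \to \mathrm{Z}_1^{p,\bullet}(K) \to \mathrm{H}_1^{p,\bullet}(K) \to 0 \quad\text{and}\quad 0 \to \mathrm{Z}_1^{p,\bullet}(K) \to K^{p,\bullet} \to \mathrm{B}_1^{p+1,\bullet}(K) \to 0
\]
force, via the long exact cohomology sequence, that $\mathrm{Z}_1^{p,\bullet}(K)$ and $K^{p,\bullet}$ are themselves resolutions of $Z^p(C)$ and $C^p$. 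On the target side, since all entries, coborders and cohomologies of $\tilde J$ are injective, the object $\mathrm{Z}_1^{p,q}(\tilde J)$ is an extension of injectives, hence injective; consequently the two analogous short exact sequences of $\tilde J$-columns split in each vertical degree, and $\mathrm{B}_1^{p,\bullet}(\tilde J),\mathrm{H}_1^{p,\bullet}(\tilde J),\mathrm{Z}_1^{p,\bullet}(\tilde J),\tilde J^{p,\bullet}$ are \emph{injective} resolutions of $B^p(\tilde C),H^p(\tilde C),Z^p(\tilde C),\tilde C^p$.

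Next I would build $\phi$ up this filtration. The map $C^\bullet\to\tilde C^\bullet$ induces $B^p(f)\colon B^p(C)\to B^p(\tilde C)$ and $H^p(f)\colon H^p(C)\to H^p(\tilde C)$. Because the target columns are injective resolutions while the source columns are resolutions, the comparison theorem for injective resolutions --- whose lifting step needs only injectivity of the target and the monomorphism $A\hookrightarrow I^0$, and \emph{never} any projectivity of the source --- produces chain maps $\phi_B^p\colon \mathrm{B}_1^{p,\bullet}(K)\to \mathrm{B}_1^{p,\bullet}(\tilde J)$ and $\phi_H^p\colon \mathrm{H}_1^{p,\bullet}(K)\to \mathrm{H}_1^{p,\bullet}(\tilde J)$ lifting $B^p(f)$ and $H^p(f)$. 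I would then assemble these into $\phi_Z^p\colon \mathrm{Z}_1^{p,\bullet}(K)\to \mathrm{Z}_1^{p,\bullet}(\tilde J)$ compatibly with the first sequence above, and finally into $\phi^{p,\bullet}\colon K^{p,\bullet}\to\tilde J^{p,\bullet}$ compatibly with the second, by the \emph{functorial} Horseshoe Lemma: given a map of short exact sequences of objects, chain maps on resolutions of the outer terms, and a degreewise-split injective resolution of the middle of the target, one produces a chain map on the middle resolutions that lifts the middle map and restricts to the given outer maps.

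It then remains to check that $\phi=(\phi^{p,q})$ is a morphism of double complexes compatible with the augmentations. Commutation with $\delta_{\mathrm{vert}}$ is automatic, as each piece is a vertical chain map. Commutation with $\delta_{\mathrm{hor}}$ holds by construction: on both sides the horizontal differential factors as the surjection onto the coboundary column, followed by the inclusions into the cocycle column and then into the next column; and $\phi^{p,\bullet}$ induces $\phi_B^{p+1}$ on the quotient $\mathrm{B}_1^{p+1,\bullet}$, while $\phi^{p+1,\bullet}$ restricts to $\phi_Z^{p+1}$, hence to $\phi_B^{p+1}$, on the subobject $\mathrm{B}_1^{p+1,\bullet}$, so the two composites agree. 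Augmentation compatibility $\phi^{p,0}\epsilon^p=\tilde\epsilon^p f^p$ is arranged at the bottom row $q=0$, where the Horseshoe assembly recovers, over the object sequences $0\to B^p(C)\to Z^p(C)\to C^p \to B^{p+1}(C)\to 0$, exactly the maps $Z^p(f)$ and $f^p$ compatible with $\epsilon$ and $\tilde\epsilon$.

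I expect the main obstacle to be precisely the functorial Horseshoe step. The degreewise splittings on the $\tilde J$ side do \emph{not} respect $\delta_{\mathrm{vert}}$, so naively combining $\phi_B^p$ and $\phi_H^p$ (and later $\phi_Z^p$ and $\phi_B^{p+1}$) fails to commute with the twisted vertical differential of the Cartan--Eilenberg column. One must therefore run an induction on the vertical degree, at each stage correcting the discrepancy by solving an extension problem against the injective object $\mathrm{H}_1^{p,q}(\tilde J)$ (respectively $\mathrm{B}_1^{p+1,q}(\tilde J)$). This is the one place where injectivity of $\tilde J$ is indispensable, and, as noted above, no dual lifting out of $K$ is ever needed, so the weaker resolution hypothesis on $K$ suffices throughout.
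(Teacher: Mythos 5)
Your argument is correct and is essentially the paper's proof: the paper simply cites Cartan--Eilenberg's Proposition XVII.1.2 and observes that its proof goes through verbatim, and what you have written out is exactly that proof --- lifting along the horizontal filtration $\mathrm{B}\subset\mathrm{Z}\subset K$ using only exactness of the source columns and injectivity of the target's entries, coborders and cohomologies, which is precisely why the weaker ``resolution'' hypothesis on $K^{\bullet\bullet}$ suffices. You also correctly isolate the only delicate point (the splittings of the $\tilde J$-columns are not compatible with the vertical differential, so the middle maps must be built by induction on the vertical degree using injectivity), which is where the Cartan--Eilenberg argument does its real work.
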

	\begin{proof}
		The same proof of \cite[Proposition XVII.1.2]{CE} proves this statement.
	\end{proof}
	
	\begin{proof}[Proof of Corollary \ref{Cor1}]
		Let $RF(A) \to J^{\bullet\bullet}$ and $RF'(\kappa_{\cA}A) \to \tilde J^{\bullet\bullet}$ be Cartan-Eilenberg resolutions in $\cA$ and $\cA'$. Note that, since $\kappa_{\cB}$ is exact, $\kappa_{\cB}RF(A) \to \kappa_{\cB}J^{\bullet\bullet}$ is still a resolution (though not necessarily Cartan-Eilenberg). 
		Thus by Lemma \ref{Lem3}, there exists a morphism $\phi:\kappa_{\cB}J^{\bullet \bullet} \dashrightarrow \tilde J^{\bullet\bullet}$ fitting in a commutative diagram of complexes:
		\[
		% https://tikzcd.yichuanshen.de/#N4Igdg9gJgpgziAXAbVABwnAlgFyxMJZARgBpiBdUkANwEMAbAVxiRACUAxAcgAoAdfgGs6aNHQD6wQQGMAggF85AShALS6TLnyEUABnJVajFm0Eixk6fxkAhBV14q1GkBmx4CRA3qP1mrIgg5qLiUrL2AFIAetYARkwMDDA4gglJKQoumh46RGS+1P6mQYJ4DLDAkQqxaYnJqfzpDVkKRjBQAObwRKAAZgBOEAC2SGQgOBBIBsYBbOwAchKc2SCDI2PUk0gATOr9Q6OIM9uIAMz7a4e7W1Pn1AxYYIEgUBA4OB0gRSYvgmgACywagoCiAA
		\begin{tikzcd}
		\kappa_{\cB}J^{\bullet\bullet} \arrow[r, "\phi", dotted] & \tilde{J}^{\bullet\bullet}   \\
		\kappa_{\cB}RF(A) \arrow[r, "N_F"] \arrow[u]            & RF'(\kappa_{\cA}A) \arrow[u]
		\end{tikzcd}.
		\]
		Applying $G'$ to the diagram, and precomposing the rows with the natural transformation $N_G:\kappa_{\cC}G \to G'\kappa_{\cB}$,
		induces a commutative diagram
		\begin{equation}\label{Eq13}
		% https://tikzcd.yichuanshen.de/#N4Igdg9gJgpgziAXAbVABwnAlgFyxMJZARgBpiBdUkANwEMAbAVxiRAHEByACgCUAxHgB0hAazpo0dAPrARAYwCCAX0UBKNSGWl0mXPkIoyABiq1GLNl24i8DWMABSygHpyhAIyYMGMHCK8fP2VNbV1sPAIiY3IzemZWRBARcUkZd3kAYWV2Pn5udVCdEAwIg2jSU2p4yySUiSlZBWzcxzcA719-T07g0LMYKABzeCJQADMAJwgAWyQYkBwIJGIwkCnZpAAmaiWkAGY1jbnEHcXlxAWaxJBeADlpdgUsSfkAAnvpfi1i44Pdi5kcwJNgiNAACywb2erw+D3YWgoyiAA
		\begin{tikzcd}[column sep=huge]
		\kappa_{\cC}G(J^{\bullet\bullet}) \arrow[r, "\phi \circ N_G"] & G'(\tilde{J}^{\bullet\bullet})   \\
		\kappa_{\cC}G(RF(A)) \arrow[u] \arrow[r, "N_F   N_G"]     & G'(RF'(\kappa_{\cA}A)) \arrow[u].
		\end{tikzcd}
		\end{equation}  

        We may assume that the derived functors $RH$ for  $H=F,G,F',G'$ are constructed by sending a chain complex $A^{\bullet}$ into the chain complex $H(I(A)^{\bullet})$, where $A^{\bullet} \to I(A)^{\bullet}$ is a quasi-isomorphism, and $A^{\bullet}=I(A)^{\bullet}$ when $A^{\bullet}$ is made of injectives \cite[Section 10.5]{Weibel}. In particular, we have
        \begin{gather*}
            G(RF(A))=RG(RF(A))=R(GF)(A), \\
            G'(RF'(\kappa_{\cA}A))=RG'(RF'(\kappa_{\cA}A))=R(G'F')(\kappa_{\cA}A),
        \end{gather*}
         and, under these identifications, Proposition \ref{Prop2} gives that:
        \begin{center}
            $(\star)$  the natural transformation $N_FN_G:\kappa_{\cC}G(RF(A)) \to G'(RF'(\kappa_{\cA}A))$ coincides with $N_{GF}:\kappa_{\cC}R(GF)(A) \to R(G'F')(\kappa_{\cA}A)$.
        \end{center}
        The Grothendieck spectral sequence $E_r^{p,q}(A)$ (resp.\ $\tilde E_r^{p,q}(\kappa_{\cA}A)$) is the second spectral sequence of the complex $G(J^{\bullet\bullet})$ (resp.\ $G'(\tilde J^{\bullet\bullet})$), which converges to 
        \[
        H^{p+q}(G(RF(A))=H^{p+q}(RG(RF(A)))=H^{p+q}(R(GF)(A))
        \]
        (resp.\ $H^{p+q}(G'(RF'(\kappa_{\cA}A)))=H^{p+q}(R(G'F')(\kappa_{\cA}A))$) \cite[Corollary 10.8.3]{Weibel}. The sought map of spectral sequences $\kappa_{\cC}E_r^{p,q} \to \tilde E_r^{p,q}$ is induced by the morphism of double complexes $\phi \circ N_G$. That the abutments commute with $N_{GF}$ follows from $(\star)$.
	\end{proof}
	
	\subsection{Compatibility of Leray and Hocschild-Serre spectral sequences}

        Recall that, for any connected scheme $S$ and geometric point $\bar s$ on it, we have an equivalence of categories (see \cite[Tag 0GIY]{stacks-project}):
	$$
	\left\{\begin{array}{c}
	\text { finite } \\ \pi_1(S, \bar{s})-\text{sets } 
	\end{array}\right\}
	%\substack{\rightarrow \\ \leftarrow}
	\leftrightarrow
	\left\{\begin{array}{c}
	\text { locally constant sheaves} \\
	\text {  of finite sets on } S_{\text {étale }}
	\end{array}\right\} .
	$$
	Let $A \mapsto \tilde A$ be the functor from left to right. We extend this functor to a functor
	$$
	\left\{\begin{array}{c}
	\text { discrete } \\ \pi_1(S, \bar{s})-\text{sets } 
	\end{array}\right\}
	%\substack{\rightarrow \\ \leftarrow}
	\rightarrow
	\left\{\begin{array}{c}
	\text { sheaves} \\
	\text {  of finite sets on } S_{\text {étale }}
	\end{array}\right\} ,
	$$
	by sending a coproduct $\sqcup_{i \in I} A_i$, where the $A_i$ are finite $\pi_1(S,\bar s)$-sets, into the étale sheaf $\sqcup_{i \in I} \tilde{A_i}$. The functor above restricts to an exact functor
	\begin{equation}\label{Eq12}
	\left\{\begin{array}{c}
	\text { discrete } \\ \pi_1(S, \bar{s})-\text{modules } 
	\end{array}\right\}
	%\substack{\rightarrow \\ \leftarrow}
	\rightarrow
	\left\{\begin{array}{c}
	\text {sheaves} \\
	\text {  of abelian groups on } S_{\text {étale }}
	\end{array}\right\},
	\end{equation}
	which we denote by $M \mapsto \tilde M$. Note that the global sections $\Gamma_S(\tilde M)$ are equal to $M^{\pi_1(S,\bar s)}$ (see \cite[Sec.I.5]{LECcompleto}), we thus have a commutative diagram
       % https://q.uiver.app/#q=WzAsNCxbMCwwLCJcXHBpXzEoUyxcXGJhciBzKS1cXG9wZXJhdG9ybmFtZXtNb2R9Il0sWzEsMCwiXFxBYiJdLFswLDEsIlxcU2hfUyJdLFsxLDEsIlxcQWIiXSxbMCwxLCJcXHN0YXJee1xccGlfMShTLFxcYmFyIHMpfSJdLFsxLDMsIj0iLDAseyJzdHlsZSI6eyJoZWFkIjp7Im5hbWUiOiJub25lIn19fV0sWzIsMywiXFxHYW1tYV9TIiwyXSxbMCwyXV0=
\[\begin{tikzcd}[column sep=huge]
	{\pi_1(S,\bar s)-\operatorname{Mod}} & \Ab \\
	{\Sh_S} & \Ab
	\arrow["{\star^{\pi_1(S,\bar s)}}", from=1-1, to=1-2]
	\arrow[from=1-1, to=2-1]
	\arrow["{=}", no head, from=1-2, to=2-2]
	\arrow["{\Gamma_S}"', from=2-1, to=2-2],
\end{tikzcd}\]
        where the first vertical morphism is the exact functor \eqref{Eq12}. This commutative diagram induces a natural transformation of derived functors of the rows, which itself induces natural transformations on cohomology:
        \begin{equation}\label{Eq17}
            N_S:H^n(\pi_1(S,\bar s),M) \to H^n(S,\tilde M).
        \end{equation}

        Let now $k$ be a perfect field, $f:X \to \Spec k$ be a geometrically integral normal $k$-variety, and $\bar x: \Spec \ok \to X$ be a geometric point. We have a short exact sequence of étale fundamental groups \cite[Proposition 5.6.1]{szamuely}:
        \begin{equation}\label{Eq18}
             1 \to \pi_1(X_{\ok},\bar x) \to \pi_1(X, \bar x) \to \pi_1(\Spec k, \Spec \ok)=\Gamma_k \to 1.
        \end{equation}

	% Let $f:X \to Y$ be a surjective morphism of normal integral schemes with geometrically integral generic fiber. Let $\bar x$ be a geometric point of $X$, and $\bar y=f(\bar x)$ be its image in $Y$. The morphism $f$ induces a homomorphism
	% \begin{equation}\label{Eq10}
	% \pi_1(f):\pi_1(X,\bar x) \to \pi_1(Y,\bar y).
	% \end{equation}
	% Since $X$ and $Y$ are normal and $f$ has geometrically integral generic fiber, \eqref{Eq10} is surjective. Let $K=\ker (\pi_1(f))$ be its kernel. 
	
	Let $M$ be a discrete $\pi_1(X,\bar x)$-module, $\tilde M$ the associated étale sheaf on $X$, and let 
	\[
	E_r^{p,q}, \tilde E_r^{p,q}
	\]
	be the Hochschild-Serre spectral sequences
        \begin{align*}
            &E_2^{p,q}=H^p(\Gamma_k,H^q(\pi_1(X_{\ok},\bar x),M)) \Rightarrow H^{p+q}(\pi_1(X,\bar x),M), \\
            &\tilde E_2^{p,q}=H^p(\Gamma_k,H^q(X_{\ok},\tilde M))  \Rightarrow H^{p+q}(X,\tilde M),
        \end{align*}
	associated to the group extension \eqref{Eq18} and to $X/k$, respectively.
	
	\begin{corollary}\label{Cor2}
		There is a morphism of spectral sequences
		\[
		N_{X_{\ok}}:E_r^{p,q} \to \tilde E_r^{p,q},
		\]
		natural in $M$, that coincides on the second page with the morphism
		\[
		H^p(\Gamma_k,H^q(\pi_1(X_{\ok},\bar x),M)) \to H^p(\Gamma_k,H^q(X_{\ok},\tilde M))
		\]
		induced by the natural transformation $N_{X_{\ok}}$, and such that the convergence diagram
		% https://q.uiver.app/#q=WzAsNCxbMCwwLCJFX3Jee3AscX0iXSxbMCwxLCJcXHRpbGRlIEVfcl57cCxxfSJdLFsxLDAsIkhee3ArcX0oRyxcXGNGKFgnKSkiXSxbMSwxLCJIXntwK3F9KFgsXFxjRikiXSxbMiwzXSxbMCwxXSxbMCwyLCIiLDEseyJsZXZlbCI6Mn1dLFsxLDMsIiIsMSx7ImxldmVsIjoyfV1d
		\[\begin{tikzcd}
		{E_r^{p,q}} & {H^{p+q}(\pi_1(X,\bar x),M)} \\
		{\tilde E_r^{p,q}} & {H^{p+q}(X,\tilde M)}
		\arrow[Rightarrow, from=1-1, to=1-2]
		\arrow[from=1-1, to=2-1, "N_{X_{\ok}}"]
		\arrow[from=1-2, to=2-2, "N_X"]
		\arrow[Rightarrow, from=2-1, to=2-2]
		\end{tikzcd}\]
		commutes.
	\end{corollary}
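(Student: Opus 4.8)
The plan is to deduce the statement directly from Corollary \ref{Cor1}, by realizing both Hochschild-Serre spectral sequences as Grothendieck spectral sequences of compositions and fitting them into the $2$-commutative diagram \eqref{Eq7}. Concretely, on the top row I would take $\cA = \pi_1(X,\bar x)\text{-Mod}$, $\cB = \Gamma_k\text{-Mod}$, $\cC = \Ab$, with $F = (-)^{\pi_1(X_{\ok},\bar x)}$ and $G = (-)^{\Gamma_k}$, so that $GF = (-)^{\pi_1(X,\bar x)}$ and its Grothendieck spectral sequence is $E_r^{p,q}$. On the bottom row I would take $\cA' = \Sh_X$, $\cB' = \Sh_{\Spec k}$, $\cC' = \Ab$, with $F' = f_*$ and $G' = \Gamma_{\Spec k}$, so that $G'F' = \Gamma_X$ and its Grothendieck (Leray) spectral sequence is $\tilde E_r^{p,q}$. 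For the vertical functors I would take $\kappa_{\cA}$ and $\kappa_{\cB}$ to be the functors $M \mapsto \tilde M$ of \eqref{Eq12} for $S = X$ and $S = \Spec k$, and $\kappa_{\cC} = \id$. Since then $\kappa_{\cC} = \id$ and $\kappa_{\cA}(M) = \tilde M$, the morphism of spectral sequences produced by Corollary \ref{Cor1} is exactly the sought $N_{X_{\ok}} : E_r^{p,q} \to \tilde E_r^{p,q}$.

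Next I would verify the hypotheses of Corollary \ref{Cor1}. All four categories have enough injectives, and all functors are additive and left exact. The functors $\kappa$ are exact (the two nontrivial ones by \eqref{Eq12}), so the $\kappa$-acyclicity conditions on $F,F'$ are automatic; it remains to note that $F$ and $F'$ preserve injectives, which holds because $F = (-)^{\pi_1(X_{\ok})}$ is right adjoint to the exact inflation functor along $\pi_1(X) \to \Gamma_k$, and $F' = f_*$ is right adjoint to the exact pullback $f^*$. The natural transformation $N_G : \kappa_{\cC} G \Rightarrow G' \kappa_{\cB}$ is the identity, since the commutative square preceding \eqref{Eq17}, applied to $S = \Spec k$ (where $\pi_1(\Spec k) = \Gamma_k$), gives $\Gamma_{\Spec k}\kappa_{\Spec k} = (-)^{\Gamma_k}$. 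The natural transformation $N_F : \kappa_{\cB} F \Rightarrow F' \kappa_{\cA}$ is the canonical comparison $\widetilde{M^{\pi_1(X_{\ok})}} \to f_* \tilde M$; computing the stalk of $f_*\tilde M$ at $\Spec \ok$ as $\varinjlim_{L/k} \tilde M(X_L) = M^{\pi_1(X_{\ok})}$ shows it is in fact an isomorphism of $\Gamma_k$-modules.

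With these inputs Corollary \ref{Cor1} yields the morphism $N_{X_{\ok}}$ together with its compatibility with the second page and the abutment. On the second page the map is, by the corollary, the one induced by $N_F N_G$; since $N_G$ is the identity this is $H^p(\Gamma_k,-)$ applied to the comparison of derived functors $\kappa_{\cB} R^q F \Rightarrow R^q F' \kappa_{\cA}$, which at the level of $\Gamma_k$-modules is a map $H^q(\pi_1(X_{\ok}),M) \to H^q(X_{\ok},\tilde M)$. On the abutment the corollary gives the map $N_{GF} = N_F N_G : H^{p+q}(\pi_1(X),M) \to H^{p+q}(X,\tilde M)$, together with the commutativity of the convergence square.

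The step I expect to be the main obstacle is the final identification of these two comparison maps with the geometric maps $N_{X_{\ok}}$ and $N_X$ of \eqref{Eq17}. The maps produced by the appendix are natural transformations of derived functors of compositions, landing in $\Gamma_k\text{-Mod}$, whereas $N_{X_{\ok}}$ and $N_X$ are defined through the single-functor comparison of \eqref{Eq17} for the bases $X_{\ok}$ and $X$ and the target category $\Ab$. To reconcile them I would apply the (exact) forgetful functor $\Gamma_k\text{-Mod} \to \Ab$, which computes the stalk at $\Spec \ok$, and use that $(R^q f_* \tilde M)_{\Spec \ok} = H^q(X_{\ok}, \tilde M)$; after this, both the appendix construction and \eqref{Eq17} become natural transformations of derived functors of $(-)^{\pi_1(X_{\ok})} = \Gamma_{X_{\ok}}\kappa_{X_{\ok}}$, respectively of $(-)^{\pi_1(X)} = \Gamma_X \kappa_X$. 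They therefore agree by the uniqueness in the universal property of derived functors (the first Lemma of the appendix and Proposition \ref{Prop2}), once one checks that the functors $M \mapsto \tilde M$ are compatible under the base-change maps $X_{\ok} \to X$ and $\Spec \ok \to \Spec k$. This bookkeeping is routine but is where the care must be taken.
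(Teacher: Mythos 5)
Your proposal is correct and follows essentially the same route as the paper: the paper likewise applies Corollary \ref{Cor1} to the $2$-commutative diagram with top row $\pi_1(X,\bar x)\text{-Mod}\to\Gamma_k\text{-Mod}\to\cA b$ (fixed-point functors), bottom row $\Sh_X\to\Sh_k\to\cA b$ ($f_*$ followed by global sections), and the sheafification functors \eqref{Eq12} as vertical arrows, identifying $\Sh_k$ with $\Gamma_k\text{-Mod}$ so that $\kappa_{\cB}$ and $\kappa_{\cC}$ become identities. Your additional verifications (injectives preserved via adjunction, $N_F$ an isomorphism on stalks, reconciliation with \eqref{Eq17}) are details the paper leaves implicit, but they do not change the argument.
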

	\begin{proof}
        We identify the categories $\Sh_k$ and $\Gamma_k-\operatorname{Mod}$ \cite[Theorem 1.9]{LECcompleto}, and apply Corollary \ref{Cor1} to the commutative diagram:
		% https://q.uiver.app/#q=WzAsNyxbMCwxLCJcXFNoX1giXSxbMSwxLCJcXFNoX2s9XFxHYW1tYV9rLVxcb3BlcmF0b3JuYW1le01vZH0iXSxbMiwxLCJcXGNBIGIiXSxbMiwwLCJcXGNBIGIiXSxbMCwyXSxbMCwwLCJcXHBpXzEoWCxcXGJhciB4KS1cXG9wZXJhdG9ybmFtZXtNb2R9Il0sWzEsMCwiXFxHYW1tYV9rLVxcb3BlcmF0b3JuYW1le01vZH0iXSxbNSwwXSxbMCwxLCJmXyoiXSxbMSwyLCJcXHN0YXJee1xcR2FtbWFfa30iXSxbNiwzLCJcXHN0YXJee1xcR2FtbWFfa30iXSxbMywyLCI9Il0sWzYsMSwiPSJdLFs1LDYsIlxcc3Rhcl57XFxwaV8xKFgsXFxiYXIgeCl9Il1d
        \[\begin{tikzcd}
	{\pi_1(X,\bar x)-\operatorname{Mod}} & {\Gamma_k-\operatorname{Mod}} & {\cA b} \\
	{\Sh_X} & {\Sh_k=\Gamma_k-\operatorname{Mod}} & {\cA b} 
	\arrow["{\star^{\pi_1(X,\bar x)}}", from=1-1, to=1-2]
	\arrow[from=1-1, to=2-1]
	\arrow["{\star^{\Gamma_k}}", from=1-2, to=1-3]
	\arrow["{=}", from=1-2, to=2-2]
	\arrow["{=}", from=1-3, to=2-3]
	\arrow["{f_*}", from=2-1, to=2-2]
	\arrow["{\star^{\Gamma_k}}", from=2-2, to=2-3]
        \end{tikzcd}\]
		where $\star^H$ denotes the functor $M \mapsto M^H$, and the first two vertical functors are as defined through \eqref{Eq12}. The composition on the first row is the functor $\star^{\pi_1(X,\bar x)}$, and its associated Grothendieck spectral sequence is the Hochschild-Serre spectral sequence of the composition \eqref{Eq18} \cite[p.105]{LECcompleto}, while the composition on the second row is the global sections functor $\Gamma_X$ and its associated spectral sequence is the Hochschild-Serre spectral sequence of $X/k$ \cite[Sec.6.8]{Weibel}. 
	\end{proof}
	
	\bibliographystyle{abbrv}      % mathematics and physical sciences
	\bibliography{homspaces}

\begin{thebibliography}{10}

\bibitem{Brown}
K.~S. Brown.
\newblock {\em Cohomology of groups}, volume~87 of {\em Graduate Texts in
  Mathematics}.
\newblock Springer-Verlag, New York, 1994.
\newblock Corrected reprint of the 1982 original.

\bibitem{CE}
H.~Cartan and S.~Eilenberg.
\newblock {\em Homological algebra}.
\newblock Princeton Landmarks in Mathematics. Princeton University Press,
  Princeton, NJ, 1999.
\newblock With an appendix by David A. Buchsbaum, Reprint of the 1956 original.

\bibitem{CV1}
L.~S. Charlap and A.~T. Vasquez.
\newblock The cohomology of group extensions.
\newblock {\em Trans. Amer. Math. Soc.}, 124:24--40, 1966.

\bibitem{CV2}
L.~S. Charlap and A.~T. Vasquez.
\newblock Characteristic classes for modules over groups. {I}.
\newblock {\em Trans. Amer. Math. Soc.}, 137:533--549, 1969.

\bibitem{BGbook}
J.-L. Colliot-Th\'{e}l\`ene and A.~N. Skorobogatov.
\newblock {\em The {B}rauer-{G}rothendieck group}, volume~71 of {\em Ergebnisse
  der Mathematik und ihrer Grenzgebiete. 3. Folge. A Series of Modern Surveys
  in Mathematics}.
\newblock Springer, Cham, [2021] \copyright 2021.

\bibitem{GP}
P.~Gille and A.~Pianzola.
\newblock Isotriviality and \'{e}tale cohomology of {L}aurent polynomial rings.
\newblock {\em J. Pure Appl. Algebra}, 212(4):780--800, 2008.

\bibitem{BrauerII}
A.~Grothendieck.
\newblock Le groupe de {B}rauer. {II}. {T}h\'eorie cohomologique [{MR}0244270
  (39 \#5586b)].
\newblock In {\em S\'eminaire {B}ourbaki, {V}ol.\ 9}, pages Exp. No. 297,
  287--307. Soc. Math. France, Paris, 1995.

\bibitem{HarariBook}
D.~Harari.
\newblock {\em Galois Cohomology and Class Field Theory}.
\newblock Springer International Publishing, 2020.

\bibitem{LECcompleto}
J.~S. Milne.
\newblock {\em \'{E}tale cohomology}, volume~33 of {\em Princeton Mathematical
  Series}.
\newblock Princeton University Press, Princeton, N.J., 1980.

\bibitem{GermanBook}
J.~Neukirch, A.~Schmidt, and K.~Wingberg.
\newblock {\em Cohomology of number fields}, volume 323 of {\em Grundlehren der
  Mathematischen Wissenschaften}.
\newblock Springer-Verlag, Berlin, second edition, 2008.

\bibitem{Sah1}
C.~H. Sah.
\newblock Cohomology of split group extensions.
\newblock {\em J. Algebra}, 29:255--302, 1974.

\bibitem{stacks-project}
T.~{Stacks Project Authors}.
\newblock \textit{Stacks Project}.
\newblock \url{https://stacks.math.columbia.edu}, 2020.

\bibitem{szamuely}
T.~Szamuely.
\newblock {\em Galois Groups and Fundamental Groups}.
\newblock Cambridge Studies in Advanced Mathematics. Cambridge University
  Press, 2009.

\bibitem{Shafarevich}
I.~R. \v{S}afarevi\v{c}.
\newblock On an existence theorem in the theory of algebraic numbers.
\newblock {\em Izv. Akad. Nauk SSSR. Ser. Mat.}, 18:327--334, 1954.

\bibitem{Weibel}
C.~A. Weibel.
\newblock {\em An Introduction to Homological Algebra}.
\newblock Cambridge University Press, apr 1994.

\end{thebibliography}
	
\end{document}